\newtheorem{theo}{Theorem}[section]
\newtheorem{prop}[theo]{Proposition}
\newtheorem{defi}[theo]{Definition}
\newtheorem{lemm}[theo]{Lemma}
\newtheorem{coro}[theo]{Corollary}
\newtheorem{conj}[theo]{Conjecture}
\theoremstyle{definition}
\newtheorem{rema}[theo]{Remark}
\newtheorem{conv}[theo]{Convention}
\newtheorem{setup}[theo]{Setup}
\newcommand{\wh}{\widehat}
\newcommand{\mc}{\mathcal}
\newcommand{\mf}{\mathfrak}
\newcommand{\sub}{\subseteq}
\newcommand{\Qp}{\mathbb{Q}_{p}}
\newcommand{\Q}{\mathbb{Q}}
\newcommand{\Z}{\mathbb{Z}}
\newcommand{\bD}{\mathbb{D}}
\newcommand{\ol}{\overline}
\newcommand{\bu}{\bullet}
\newcommand{\oo}{\mathcal{O}}
\newcommand{\Bun}{\mathrm{Bun}}
\newcommand{\lis}{\mathrm{lis}}
\newcommand{\D}{\mathcal{D}}
\newcommand{\ZZ}{\mathcal{Z}}
\newcommand{\spec}{\mathrm{spec}}
\newcommand{\geom}{\mathrm{geom}}
\newcommand{\Hecke}{\mathrm{Hecke}}
\newcommand{\basic}{\mathrm{basic}}
\newcommand{\m}{\mathfrak{m}}
\newcommand{\qlb}{\overline{\mathbb{Q}}_{\ell}}
\newcommand{\X}{\mathfrak{X}}
\newcommand{\Y}{\mathfrak{Y}}
\DeclareMathOperator{\Hom}{Hom}
\DeclareMathOperator{\End}{End}
\DeclareMathOperator{\GL}{GL}
\DeclareMathOperator{\PGL}{PGL}
\DeclareMathOperator{\SL}{SL}
\DeclareMathOperator{\GSp}{GSp}
\DeclareMathOperator{\Rep}{Rep}
\DeclareMathOperator{\Sp}{Sp}
\DeclareMathOperator{\SO}{SO}
\DeclareMathOperator{\Lie}{Lie}
\DeclareMathOperator{\Perf}{Perf}
\DeclareMathOperator{\Sht}{Sht}
\DeclareMathOperator{\Act}{Act}
\DeclareMathOperator{\Mant}{Mant}
\DeclareMathOperator{\Irr}{Irr}
\DeclareMathOperator{\QCoh}{QCoh}
\DeclareMathOperator{\Mod}{Mod}
\DeclareMathOperator{\RHom}{RHom}
\title{A note on the cohomology of moduli spaces of local shtukas}
\author{David Hansen} 
\address{David Hansen\\Department of Mathematics, National University of Singapore, 10 Lower Kent Ridge Road,
Singapore 119076}
\email{dhansen@nus.edu.sg}
\urladdr{http://www.davidrenshawhansen.net}
\author{Christian Johansson}
\address{Christian Johansson\\Department of Mathematical Sciences, Chalmers University of Technology and the University of Gothenburg, 412 96 Gothenburg, Sweden}
\email{chrjohv@chalmers.se}
\urladdr{http://www.math.chalmers.se/~chrjohv/}
\begin{document}

\begin{abstract}
We study localized versions of spectral action of Fargues--Scholze, using methods from higher algebra. As our main motivation and application, we deduce a formula for the cohomology of moduli spaces of local shtukas under certain genericity assumptions, and discuss its relation with the Kottwitz conjecture. 
\end{abstract}

\maketitle

\section{Introduction}

The purpose of this note is to explicate a formula for the cohomology of moduli spaces of local shtukas that can be derived from the recent work of Fargues--Scholze \cite{fargues-scholze}, and to note some consequences. Let $p \neq \ell$ be distinct primes. The theory of local Shimura varieties, which began with examples such as the Lubin--Tate and Drinfeld towers, and continued with the moduli spaces of $p$-divisible groups of Rapoport--Zink \cite{rapoport-zink}, has reached a new level of generality with Scholze's definition of moduli spaces of local shtukas \cite{berkeleylectures}. For any connected reductive group $G/\Qp$, any element $b$ in Kottwitz's set $B(G)$, and any finite collection $\mu_{\bu} = (\mu_i)_{i\in I}$ of conjugacy classes of cocharacters $\mu_i$ of $G$, Scholze defines a tower 
\[
(\Sht_{(G,b,\mu_\bu),K})_K
\]
of diamonds, with inverse limit $\Sht_{(G,b,\mu_\bu)}$, where $K \sub G(\Qp)$ runs through the compact open subgroups. When $\mu_\bu = \mu$ is a single minuscule cocharacter, the $\Sht_{(G,b,\mu),K}$ are smooth rigid spaces referred to as local Shimura varieties. The space $\Sht_{(G,b,\mu_\bu)}$ carries commuting actions of $G(\Qp)$ and $G_b(\Qp)$, where $G_b$ is the inner form of a Levi subgroup of the quasisplit form of $G$ canonically attached to the datum $(G,b)$. For any irreducible admissible $G_b(\Qp)$-representation $\rho$ over $\qlb$, one may define the  ``$\rho$-isotypic part of the $\qlb$-intersection cohomology of $\Sht_{(G,b,\mu_\bu)}$'', which we will denote by  
\[
R\Gamma(G,b,\mu_\bu)[\rho].
\]
We note that this definition naturally incorporates a shift making $0$ the `middle degree'. It carries commuting actions of $G(\Qp)$ and $W_{E_\bu} := \prod_{i\in I} W_{E_i}$ (where $W_{E_i}$ is the Weil group of the reflex field $E_i$ of $\mu_i$) and is a bounded complex of finite length admissible $G(\Qp)$-representations. Roughly speaking, the local Langlands conjecture associates an $L$-parameter $\phi = \phi_\rho : W_{\Qp} \to (\wh{G} \rtimes W_{\Qp})(\qlb)$ with $\rho$, as well as an $L$-packet $\Pi_\phi(G)$ of irreducible admissible representations of $G(\Qp)$. Furthermore, there should be a relation between $\Irr(S_\phi,\chi_b)$ and $\Pi_\phi(G)$. Here $S_\phi$ is the centralizer in $\wh{G}$ of the image of $\phi$, and $\Irr(S_\phi,\chi_b)$ denotes the set of irreducible representations of $S_\phi$ on which $Z(\wh{G})^{W_{\Qp}} \sub S_\phi$ acts by a certain character $\chi_b$ determined by $b$. Let $V$ be the dual of the irreducible representation of $\prod_{i\in I} \wh{G} \rtimes W_{E_i}$ with extreme weight $\mu_\bu$. The underlying vector space of $V$ also carries an action of $W_{E_\bu}$ coming from $\phi$, and this action makes it into a $S_\phi \times W_{E_\bu}$-representation that we will denote by $V_\phi$. Define
\[
\Mant_{G,b,\mu_\bu}(\rho) := \sum_{n} (-1)^n H^n(R\Gamma(G,b,\mu_\bu)[\rho])
\]
in the Grothendieck group of $G(\Qp) \times W_{E_\bu}$-representations. Write $\Rep(S_\phi,\chi_b)$ for the set of finite-dimensional representations of $S_\phi$ on which $Z(\wh{G})^{W_{\Qp}}$ acts by $\chi_b$. The following is then the natural generalization of the Kottwitz conjecture (see e.g. \cite[\S 7.1]{rapoport-viehmann}, \cite[Conjecture 1.0.1]{hkw}), together with a folklore vanishing conjecture.

\begin{conj}\label{kottwitz conjecture} Assume that $b$ is basic and $\phi = \phi_\rho$ is elliptic, i.e. $\phi$ is semisimple and $S_\phi/Z(\wh{G})^{W_{\Qp}}$ is finite. In this case a construction assuming the local Langlands correspondence gives a map $\Pi_\phi(G) \to \Rep(S_\phi,\chi_b)$, which we denote by $\pi \mapsto \delta_{\pi,\rho}$.

\medskip

\begin{enumerate}
\item (Kottwitz conjecture) We have $\Mant_{G,b,\mu_\bu}(\rho) = \sum_{\pi \in \Pi_\phi(G)} \pi \boxtimes \Hom_{S_\phi}(\delta_{\pi,\rho},V_{\phi})$.

\medskip

\item (Vanishing conjecture) $H^n(R\Gamma(G,b,\mu_\bu)[\rho]) = 0$ for $n \neq 0$.
\end{enumerate}
\end{conj}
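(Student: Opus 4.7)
The plan is to derive Conjecture~\ref{kottwitz conjecture} from the Fargues--Scholze spectral action, localized at the closed point of the stack of $L$-parameters determined by $\phi$. First, I would reinterpret $R\Gamma(G,b,\mu_\bu)[\rho]$ geometrically on $\Bun_G$: letting $i_b$ and $i_1$ denote the embeddings of the basic and trivial strata, and letting $\cF_\rho$ denote the sheaf on $\Bun_G^b \cong [\ast/G_b(\Qp)]$ corresponding to $\rho$, the cohomology in question is, up to the appropriate shift, the functor $i_1^\ast T_V(i_{b,!}\cF_\rho)$, where $T_V$ is the Hecke operator attached to the representation $V$. This reduction is essentially a matter of unwinding the definition of moduli of local shtukas and applying the six-functor formalism on small $v$-stacks.

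Second, I would apply the spectral action: the category $D_\lis(\Bun_G, \qlb)$ is linear over $\Perf$ of the stack $Z^1(W_{\Qp}, \wh{G})/\wh{G}$ of $L$-parameters, and the Hecke operator $T_V$ acts by tensoring with the tautological vector bundle associated to $V$. Ellipticity of $\phi$ means that $[\phi]$ lies in a connected component of the coarse spectral moduli consisting of a single closed point with finite inertia $S_\phi/Z(\wh{G})^{W_{\Qp}}$. Localizing at this component produces a direct summand $D_\lis(\Bun_G, \qlb)_\phi$ which is linear over a category of $S_\phi$-representations, within which $T_V$ acts as tensoring with $V_\phi$.

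Third, combining this localization with the expected categorical form of the refined local Langlands correspondence for $G_b$, the sheaf $i_{b,!}\cF_\rho$, after projection to $D_\lis(\Bun_G, \qlb)_\phi$, should decompose according to $\Irr(S_\phi, \chi_b)$ with $\pi_\delta$-isotypic pieces whose $S_\phi$-equivariant structure records multiplicities via $\Hom_{S_\phi}(-, V_\phi)$. Applying $T_V$ and then $i_1^\ast$ then produces the formula in~(1) in the Grothendieck group. For the vanishing in~(2), I would argue that on the localized category $T_V$ is $t$-exact (after the standard shift built into the normalization of $R\Gamma$): smoothness of the parameter stack at the elliptic point $[\phi]$ reduces the spectral action to tensoring with the finite-dimensional $S_\phi$-representation $V_\phi$, which preserves the heart, while the inputs $i_{b,!}\cF_\rho$ and the stalk $i_1^\ast$ are individually concentrated in degree zero.

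The main obstacle will be making the localization rigorous: one needs the idempotent projector onto the $\phi$-component of $D_\lis(\Bun_G, \qlb)$ to exist, to respect the $t$-structure, and to commute with the Hecke functors, and these properties do not seem to hold unconditionally. This is almost certainly where the ``genericity assumptions'' mentioned in the abstract enter --- for instance, the assumption that $\phi$ is supercuspidal, or that $[\phi]$ is cohomologically isolated in the sense that the $S_\phi$-equivariant tangent complex of the parameter stack vanishes there. A secondary but unavoidable issue is to match the Fargues--Scholze semisimple parameter attached to $\rho$ with the refined local Langlands parameter $\phi_\rho$, together with a compatible identification of $\chi_b$ and of the map $\delta \mapsto \pi_\delta$; without these compatibilities, the spectral computation only yields~(1) up to a relabeling of parameters and characters, and this is presumably the form in which the main theorem of the note is stated.
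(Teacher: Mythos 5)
The statement you are addressing is Conjecture \ref{kottwitz conjecture}, and the paper does not prove it --- it remains a conjecture there. What the paper establishes is a collection of partial results in its direction (Theorem \ref{main formula}, Corollary \ref{main elliptic}, Theorem \ref{Kottwitz}, Proposition \ref{vanishing}), all under extra hypotheses, and your proposal essentially reconstructs the strategy behind those partial results rather than a proof of the conjecture itself. Your first two steps are sound and match the paper: the identification $R\Gamma(G,b,\mu_\bu)[\rho] \cong i^{1\ast}T_V i^b_\ast\rho$ is equation (\ref{Shtukas and Hecke}), and the localization of the spectral action at the closed point $\varphi$ of the coarse moduli space is carried out in \S\ref{sec: action} --- though not via an idempotent projector onto a connected component (the component containing an elliptic parameter is a whole orbit of unramified twists, not a single point), but via the module category $\Mod_{\oo_{\X_\varphi}}(\D_{\lis}(\Bun_G))$ for the derived fibre $\X_\varphi$. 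The relevant genericity condition is not supercuspidality or vanishing of the tangent complex but that $\varphi$ be \emph{generous} (Definition \ref{def: generous}), which guarantees $\X_\varphi \cong [\ast/S_\varphi]$ is classical; this holds automatically for elliptic parameters (Lemma \ref{semisimplicity}).

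The genuine gaps are in your third step and in the vanishing argument. First, the decomposition of (the localization of) $i^b_\ast\rho$ into $\pi_\delta$-isotypic pieces indexed by $\Irr(S_\phi,\chi_b)$ is not something the spectral action gives you; it is essentially the categorical form of the refined local Langlands correspondence, which is itself conjectural (see Remark \ref{remarks on main}(2)). What the spectral action gives unconditionally for generous $\varphi$ is the decomposition of $V$ as an $S_\varphi\times W_{E_\bu}$-representation, hence $R\Gamma(G,b,\mu_\bu)[\rho] \cong \bigoplus_\delta i^{1\ast}\Act_\delta(i^b_\ast\rho)\boxtimes V_{\varphi,\delta}$; identifying $i^{1\ast}\Act_\delta(i^b_\ast\rho)$ with the correct member $\pi_\delta$ of the packet requires the full refined local Langlands conjecture plus the main theorem of \cite{hkw}, and even then only in the Grothendieck group, only up to the unresolved comparison $\phi=\varphi$, and only up to a possible relabelling of the packet --- this is exactly the content and the limitation of Theorem \ref{Kottwitz}. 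Second, your argument for part (2) does not work: tensoring with the finite-dimensional $S_\varphi$-representation $\delta$ is exact on $\QCoh([\ast/S_\varphi])$, but the resulting endofunctor $\Act_\delta$ of $\D_{\lis}(\Bun_G)$ need not preserve the heart --- it permutes the basic strata and can introduce shifts, and the assertion that $i^{1\ast}\Act_\delta(i^b_\ast\rho)$ is concentrated in degree $0$ is precisely the vanishing conjecture, not a consequence of exactness of the spectral action. The paper only proves that this object is a split complex of supercuspidals (Corollary \ref{main elliptic}), with degree-$0$ concentration established in special cases: stable parameters with $b=1$, inner forms of $\GL_n$ via \cite{hansen}, and essentially self-dual situations via Verdier duality (Proposition \ref{vanishing}).
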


We will refer to the conjunction of parts (1) and (2) as the \emph{strong Kottwitz conjecture}. When $b$ is not basic, an extension of the Harris--Viehmann conjecture \cite[Conjecture 8.4]{rapoport-viehmann} gives a formula for $\Mant_{G,b,\mu_\bu}(\rho)$ in terms of the basic case for smaller reductive groups; we will not elaborate on this further. Our main goal is a formula for $R\Gamma(G,b,\mu_\bu)[\rho]$. We recall that \cite{fargues-scholze} associates a semisimple $L$-parameter with any irreducibe admissible representation of a connected reductive group; we call this the Fargues--Scholze parameter. It is expected to be semisimplication of the $L$-parameter appearing in the local Langlands conjecture. Our main result is then the following:

\begin{theo}\label{main intro}
Assume that the Fargues--Scholze parameter $\varphi$ attached to $\rho$ is generous. Then
\[
R\Gamma(G,b,\mu_\bu)[\rho] \cong \bigoplus_{\delta \in \Irr(S_\phi,\chi_b)} C_\delta \boxtimes \Hom_{S_\varphi}(\delta,V_{\varphi}),
\]
in the derived category of $G(\Qp) \times W_{E_\bu}$-representations. If $\varphi$ is elliptic, then $C_\delta$ is a nonzero split bounded complex of finite direct sums of supercuspidal representations of $G(\Qp)$ with Fargues--Scholze parameter $\varphi$.
\end{theo}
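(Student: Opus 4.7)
The plan is to use the Fargues--Scholze spectral action, in the localized form developed earlier in this paper, to reduce the theorem to an analysis of the Hecke functor $T_V$ near the point $\varphi$ of the stack $\mathrm{Loc}_{\wh{G}}$ of $L$-parameters. The starting point is the identification of $R\Gamma(G,b,\mu_\bu)[\rho]$ with $i_1^{\ast}\, T_V(i_{b,!}\rho)$ (up to appropriate shift and twist), where $i_b$ and $i_1$ are the inclusions of the $b$- and trivial strata of $\Bun_G$ and $T_V$ is the geometric Satake Hecke functor attached to $V$. This interpretation, due to \cite{fargues-scholze}, moves the entire problem into the derived category $D(\Bun_G,\qlb)$.

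The next step is to invoke the spectral action: $T_V$ acts on $D(\Bun_G,\qlb)$ as tensor product with a universal vector bundle $\mc{V}$ on $\mathrm{Loc}_{\wh{G}}$ whose geometric fibre at a parameter $\phi$ is canonically the $S_\phi \times W_{E_\bu}$-representation $V_\phi$. The hypothesis that $\varphi$ is generous is designed precisely to ensure that the $\varphi$-localization of $D(\Bun_G,\qlb)$ is clean, in the sense that there is a genuine direct summand decomposition as a stable $\infty$-category into the block supported near $\varphi$ and its complement. In particular, passing to the $\rho$-isotypic part commutes with both the spectral localization and with $T_V$, identifying $R\Gamma(G,b,\mu_\bu)[\rho]$ with $i_1^{\ast}\bigl( (\mc{V}|_{\varphi}) \otimes (i_{b,!}\rho)_{\varphi} \bigr)$.

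The decomposition over $\delta \in \Irr(S_\varphi,\chi_b)$ now falls out of the canonical isotypic decomposition
\[
V_\varphi \;\cong\; \bigoplus_{\delta \in \Irr(S_\varphi)} \delta \boxtimes \Hom_{S_\varphi}(\delta, V_\varphi)
\]
as an $S_\varphi \times W_{E_\bu}$-representation, once we define $C_\delta$ to be the $\delta$-isotypic part of the $\varphi$-localized object $i_1^{\ast}(i_{b,!}\rho)_\varphi$. The restriction of the sum to $\delta \in \Irr(S_\varphi,\chi_b)$ reflects the fact that the $\rho$-isotypic piece has central character $\chi_b$ under $Z(\wh{G})^{W_{\Qp}} \subseteq S_\varphi$, by Kottwitz's formula for the central character attached to the $b$-stratum.

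For the final assertion, if $\varphi$ is elliptic, then $S_\varphi/Z(\wh{G})^{W_{\Qp}}$ is finite and $\varphi$ is an isolated point of the coarse space of $\mathrm{Loc}_{\wh{G}}$ with fixed central character. At such a point the localized spectral action is expected to be formally semisimple and to land in a Bernstein block consisting entirely of supercuspidal representations, which forces each $C_\delta$ to be a split bounded complex of finite direct sums of irreducible supercuspidals with Fargues--Scholze parameter $\varphi$. Non-vanishing then reduces to non-vanishing of $R\Gamma(G,b,\mu_\bu)[\rho]$ itself, which is known in this setting. The main obstacle I anticipate is the clean localization in Step 2: promoting the spectral action from a derived statement to a functorial orthogonal decomposition of the $\infty$-category along the generous locus, rather than merely a filtration or a Grothendieck-group identity, is precisely the higher-algebraic content advertised in the abstract, and is what allows the stated derived-level identification to hold.
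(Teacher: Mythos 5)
Your overall strategy---rewrite $R\Gamma(G,b,\mu_\bu)[\rho]$ as $i^{1\ast}T_V i^b_\ast\rho$, localize the spectral action at $\varphi$, and decompose $V_\varphi$ into $S_\varphi$-isotypic pieces---is the same as the paper's, but the step you yourself flag as the main obstacle is resolved in a way that would fail. A generous parameter is \emph{not} an isolated point of the coarse space $X$ (it always admits unramified twists), so there is no direct-summand decomposition of $\D_{\lis}(\Bun_G)$ into a block ``supported near $\varphi$'' and its complement; that is not what generosity provides. The paper instead localizes by base change: it forms the module category $\D^\varphi_{\lis}(\Bun_G)=\Mod_{\oo_{\X_\varphi}}(\D_{\lis}(\Bun_G))$ over the structure sheaf of the \emph{derived} fibre $\X_\varphi=\{\varphi\}\times_X\X$. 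Two nontrivial inputs are needed there, neither of which appears in your sketch: first, $i^b_\ast\rho$ must be shown to lift canonically to this module category, which works because $\rho$ sits in a single cohomological degree with scalar endomorphisms, so the $\ZZ^{\spec}(G)$-action genuinely factors through the residue field at $\varphi$ (Proposition \ref{actions}(2), Corollary \ref{containmen in D-phi}); second, the generous hypothesis is used, via a flatness argument for $\pi:X^\Box\to X$ near $\varphi$ (Proposition \ref{generous implies flat}), to prove that the derived fibre is classical and equals $[\ast/S_\varphi]$, whence $\QCoh(\X_\varphi)\cong\Rep(S_\varphi)$ and $T_V$ factors through $\Rep(S_\varphi)^{BW_{E_\bu}}$. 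Relatedly, $C_\delta$ is not ``the $\delta$-isotypic part of $i^{1\ast}(i^b_\ast\rho)$'' (that object carries no $S_\varphi$-action); it is $i^{1\ast}\Act_\delta(i^b_\ast\rho)$, the spectral action of the summand $\delta$ of $V_\varphi$ applied to $i^b_\ast\rho$.

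The elliptic case also has real gaps. You assert that the localized category ``is expected to be formally semisimple,'' but this is precisely what must be proved to get splitness of $C_\delta$: the paper's Lemma \ref{semisimplicity} establishes it by showing the relevant Bernstein blocks are supercuspidal, that the induced map on centres $\oo(C)\to\ZZ(\mc{C})$ is finite Galois (equivariance under unramified twisting), and that the endomorphism ring of a compact generator is Azumaya over the Bernstein centre, so the derived fibre of the block at $\varphi$ is a finite product of matrix algebras over $\qlb$. Finally, non-vanishing of each $C_\delta$ does not reduce to non-vanishing of the total complex $R\Gamma(G,b,\mu_\bu)[\rho]$: that would only show that \emph{some} $C_\delta$ is nonzero. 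One needs that each $\Act_\delta(i^b_\ast\rho)$ is itself nonzero and, in the elliptic case, concentrated on the appropriate basic stratum (equation (\ref{splitting})), so that applying $i^{1\ast}$ does not kill it.
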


The precise version, which notably includes a formula for each $C_\delta$ in terms of $\rho$ and $\delta$, is given in Theorem \ref{main formula} and Corollary \ref{main elliptic} (and works in positive characteristic as well). A parameter $\varphi$ is called generous if it is semisimple, if no other $L$-parameter has semisimplification $\varphi$, and if $\varphi$ satisfies an additional technical moduli-theoretic condition (see Definition \ref{def: generous}). Generous parameters are generic on the coarse moduli space and includes the elliptic parameters. Thus, under the expectation that $\varphi = \phi$, the action of $W_{E_\bu}$ on $R\Gamma(G,b,\mu_\bu)[\rho]$ is essentially as predicted from the Kottwitz conjecture. We note that our formula may be seen as a (more general) local analogue of \cite[Prop. 1.2]{lafforgue-zhu}. 

\medskip

The proof of Theorem \ref{main intro} is given in sections \ref{sec: action} and \ref{sec: formula}. The main idea is to apply the machinery of higher algebra to the spectral action of Fargues--Scholze, to obtain a version of the spectral action which only sees one $L$-parameter at a time. The general version of this idea is described in \S \ref{sec: action}, and we believe that this should be a useful tool in the study of the spectral action in general. In \S \ref{sec: formula} we use it to derive Theorem \ref{main intro}. The key point that we wish to make, and which is needed to carry out the proof, is that even though the machinery that we use (monoidal $\infty$-categories and their modules) is highly abstract, it allows you to make computations. This would fail if we tried to work with triangulated categories instead of their $\infty$-categorical enhancements.

\medskip

Section \ref{sec: consequences} then gives some applications of Theorem \ref{main intro} to both parts of Conjecture \ref{kottwitz conjecture}; we highlighting one such application here. When disregarding the $W_{E_\bu}$-action, Conjecture \ref{kottwitz conjecture} was recently proven in \cite{hkw} under the assumption of a precise form of the local Langlands correspondence. Combining this with Theorem \ref{main intro}, one gets the following result.

\begin{theo}\label{intro kottwitz}
Assume the refined local Langlands correspondence \cite[Conjecture G]{kaletha}, and let $\phi$ be the $L$-parameter attached to $\rho$. Assume further that the Fargues--Scholze parameter $\varphi$ attached to $\rho$ is elliptic, that all $\delta \in \Irr(S_\varphi,\chi_b)$ are one-dimensional, and that all representations in $\Pi_\phi(G)$ are supercuspidal. Then there exists a surjection $\delta \mapsto \pi_\delta$ from $\Irr(S_\varphi,\chi_b)$ to $\Pi_\phi(G)$ such that
\[
\Mant_{G,b,\mu_\bu}(\rho) = \sum_{\delta \in \Irr(S_\varphi,\chi_b)} \pi_\delta \boxtimes \Hom_{S_\varphi}(\delta,V_\varphi)
\]
in the Grothendieck group of $G(\Qp) \times W_{E_\bu}$-representations.
\end{theo}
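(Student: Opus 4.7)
The plan is to combine Theorem \ref{main intro} with the $G(\Qp)$-equivariant Kottwitz formula proved in \cite{hkw}, using the one-dimensionality of $\delta$ and the supercuspidality hypothesis on $\Pi_\phi(G)$ to upgrade the virtual representations produced by Theorem \ref{main intro} to honest irreducibles.

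First, ellipticity implies generousness, so Theorem \ref{main intro} applies and yields
\[
R\Gamma(G,b,\mu_\bu)[\rho] \cong \bigoplus_{\delta \in \Irr(S_\varphi,\chi_b)} C_\delta \boxtimes \Hom_{S_\varphi}(\delta, V_\varphi),
\]
with each $C_\delta$ a nonzero bounded complex of finite direct sums of supercuspidals with Fargues--Scholze parameter $\varphi$. Under refined LLC, the hypothesis that every element of $\Pi_\phi(G)$ is supercuspidal forces $\phi$ to be a supercuspidal (hence elliptic) $L$-parameter; since $\varphi = \phi^{ss}$ and both parameters are now semisimple, $\phi = \varphi$, so $S_\phi = S_\varphi$ and $V_\phi = V_\varphi$ as $S_\phi \times W_{E_\bu}$-representations. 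Moreover, expected compatibility of refined LLC with the Fargues--Scholze construction implies that every supercuspidal with Fargues--Scholze parameter $\varphi$ lies in the finite packet $\Pi_\phi(G)$, so each irreducible constituent of every $C_\delta$ lies in $\Pi_\phi(G)$. Taking Euler characteristics,
\[
\Mant_{G,b,\mu_\bu}(\rho) = \sum_{\delta \in \Irr(S_\varphi,\chi_b)} [C_\delta] \boxtimes \Hom_{S_\varphi}(\delta, V_\varphi)
\]
in the Grothendieck group of $G(\Qp) \times W_{E_\bu}$-representations.

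I would then appeal to the explicit description of $C_\delta$ from the precise statement in Theorem \ref{main formula}---essentially a $\delta$-spectral projection of the sheaf on $\Bun_G$ attached to $\rho$---and exploit the one-dimensionality of $\delta$ (which makes the projector as simple as evaluation of a character of $S_\varphi$) to conclude that each $[C_\delta]$ is a single irreducible $\pi_\delta^\prime \in \Pi_\phi(G)$, not a virtual combination. Finally, comparing the $G(\Qp)$-restriction of our formula with the formula $\sum_\delta \dim \Hom_{S_\phi}(\delta, V_\phi) \cdot \pi_\delta$ supplied by \cite{hkw} (where $\delta \mapsto \pi_\delta$ is the surjection onto $\Pi_\phi(G)$ predicted by refined LLC) shows $\delta \mapsto \pi_\delta^\prime$ also surjects onto $\Pi_\phi(G)$. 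The main obstacle is this irreducibility step---converting each $[C_\delta]$ into a genuine single irreducible rather than a signed combination that happens to sum to the same thing---which requires using the explicit formula for $C_\delta$ and the one-dimensionality hypothesis in an essential way, since neither the decomposition of Theorem \ref{main intro} nor HKW alone is strong enough to rule out cancellations in the virtual combinations that would spoil the desired form of the answer.
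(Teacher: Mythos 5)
Your overall strategy --- feed the decomposition of Theorem \ref{main intro} into the $G(\Qp)$-equivariant formula of \cite{hkw} --- is the same as the paper's, but there is a genuine gap in how you place the constituents of the $C_\delta$ inside $\Pi_\phi(G)$. You argue that supercuspidality of $\Pi_\phi(G)$ forces $\phi$ to be elliptic, that $\varphi = \phi^{ss}$, and hence that $\phi = \varphi$, $S_\phi = S_\varphi$ and $V_\phi = V_\varphi$. But the identity $\varphi = \phi^{ss}$ is only an \emph{expected} compatibility between the Fargues--Scholze construction and the refined local Langlands correspondence; it is not among the hypotheses and is not known in general. The paper is deliberately phrased with $S_\varphi$, $\Irr(S_\varphi,\chi_b)$ and $V_{\varphi,i}$ on one side and $\Pi_\phi(G)$ on the other precisely because the two parameters cannot be identified (the remark following the proof lists ``we do not know that $\phi=\varphi$'' as the first open issue). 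The paper's proof never identifies $\phi$ with $\varphi$: Corollary \ref{main elliptic} gives $i^{1\ast}\Act_{\delta_i}(i^b_\ast\rho) = \pi_i^\prime[n_i]$ with $\pi_i^\prime$ a single irreducible supercuspidal --- this is exactly the irreducibility step you flag as ``the main obstacle,'' and it is already proved there, using that $\Act_{\delta_i}$ is an equivalence for one-dimensional $\delta_i$ together with the semisimplicity of $\D^\varphi(G_b(E))$ from Lemma \ref{semisimplicity} --- and then one compares the two resulting expressions for $\Mant_{G,b,\mu_\bu}(\rho)$ in the Grothendieck group of $G(E)$, namely $\sum_i (-1)^{n_i}\pi_i^\prime\dim V_{\varphi,i}$ and $\sum_j \pi_j\dim V_{\phi,j}$ with $\pi_j\in\Pi_\phi(G)$. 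Since the latter has nonnegative coefficients and the $\pi_i^\prime$ are irreducible, this comparison alone forces $\{\pi_i^\prime\}\sub\Pi_\phi(G)$, with no compatibility between $\phi$ and $\varphi$ needed.

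Two further points. First, even granting that each $C_\delta$ is a single irreducible up to shift, your argument never removes the signs $(-1)^{n_\delta}$ that the shifts contribute in the Grothendieck group; the asserted formula has no signs, and short of the vanishing conjecture the only way to kill them is the positivity comparison with \cite{hkw} just described (which shows the $n_i$ are even). Second, for surjectivity onto all of $\Pi_\phi(G)$ you must vary $\mu_\bu$: for a fixed $\mu_\bu$ some multiplicity spaces $V_{\phi,j}$ may vanish, so one needs that for each $j$ there is some $\mu_\bu$ with $V_{\phi,j}\neq 0$ (and similarly for the $V_{\varphi,i}$). This is used silently in your final comparison.
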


\subsection*{Acknowledgements}

We thank the anonymous referee for many comments which helped improve the paper. We also thank Pol van Hoften for helpful discussions on a previous version of this paper. C.J. was supported by Vetenskapsr\r{a}det Grant 2020-05016, Geometric structures in the p-adic Langlands program. D.H. was supported by a startup grant through the National University of Singapore.

\section{Spectral action with supports}\label{sec: action}

In this section we derive a version of the spectral action, taking supports in the coarse moduli space of $L$-parameters into account. A very special case of this is considered in \cite{fargues-scholze}, when the support is a connected component of the coarse moduli space. The general statement turns out to be rather formal, using the machinery of higher algebra. We will use the same notation as in \cite{fargues-scholze} as much as possible, except that we will work over $\qlb$ as opposed to the more general $\Z_\ell$-algebras $\Lambda$ considered in \cite{fargues-scholze}\footnote{Working over $\qlb$, as opposed to over a general field extension of $\Q_\ell(\sqrt{q})$, is just a matter of convenience. We would hope that much of our picture carries over to all cases considered in \cite{fargues-scholze}, but there are additional technical challenges in mixed characteristic.}. 

\medskip

We start with a quick recap of some of the main players. In what follows, $\ell \neq p$ will be two distinct primes and $G$ be a connected reductive group over a local field $E$ of residue characteristic $p$. The size of the residue field of $E$ will be denoted by $q$. The dual group of $G$ over $\qlb$ will be denoted by $\wh{G}$. $\wh{G}$ carries an action of the Weil group $W_E$, which factors through a finite quotient $Q$, and one can form the semidirect product $\wh{G} \rtimes Q$. In \cite[\S III]{fargues-scholze}, the Artin v-stack $\Bun_G$ of $G$-bundles on the Fargues--Fontaine curve is defined. Its underlying topological space $|\Bun_G|$ is naturally identified with Kottwitz's set $B(G)$, and for any $b\in B(G)$ we have an immersion
\[
i^b :  \Bun_G^b \to \Bun_G.
\]
The main player on the geometric side of the geometrization of the local Langlands correspondence is the stable $\infty$-category $\D_{\lis}(\Bun_G,\qlb)$, defined in \cite[\S VII.7]{fargues-scholze}, and its counterparts $\D_{\lis}(\Bun_G^b,\qlb)$ on the strata $\Bun_G^b$, which are equivalent to the derived categories $\D(G_b(E),\qlb)$ of smooth representations of $G_b(E)$ over $\qlb$. 

\begin{conv}
Since we will only work over $\qlb$, we drop it from the notation for the objects on the geometric side, writing $\D_{\lis}(\Bun_G) := \D_{\lis}(\Bun_G,\qlb)$, $\D_{\lis}(\Bun^b_G) := \D_{\lis}(\Bun^b_G,\qlb)$, $\D(G_b(E)) :=\D(G_b(E),\qlb)$, etc.
\end{conv}

For any stable $\infty$-category $\D(\dots)$, its homotopy category will be denoted by $D(\dots)$, and in any ($\infty$-) category $\mc{C}$, we will let $\mc{C}^\omega$ denote the full subcategory of compact objects. $\D_{\lis}(\Bun_G)$ carries the action of Hecke operators \cite[Thm. IX.0.1]{fargues-scholze}: For every finite set $I$ and any algebraic representation $V$ of $(\wh{G} \rtimes Q)^I$ over $\qlb$, there is an exact functor
\[
T_V : \D_{\lis}(\Bun_G) \to \D_{\lis}(\Bun_G)^{BW_E^I}
\]
which preserves compact objects, where $\D_{\lis}(\Bun_G)^{BW_E^I}$ denotes the category of $W_E^I$-equivariant objects in $\D_{\lis}(\Bun_G)$ (see \cite[\S IX.1]{fargues-scholze} for precise definitions). The $T_V$ fit together into exact $\Rep(Q^I)$-linear monoidal functors
\[
\Rep((\wh{G} \rtimes Q)^I) \to \End_{\qlb}(\D_{\lis}(\Bun_G)^\omega)^{BW_E^I},
\]
which are functorial as $I$ varies. Here and below, whenever $\mc{C}$ is a stable $\qlb$-linear $\infty$-category, we let $\End_{\qlb}(\mc{C})$ denote the $\infty$-category of \emph{exact} $\qlb$-linear endofunctors of $\mc{C}$. Moreover, $\End^L_{\qlb}(\mc{C})\sub \End_{\qlb}(\mc{C})$ will denote the full subcategory of colimit-preserving endofunctors (note that colimit-preserving functors are exact, by \cite[Prop. 1.1.4.1]{lurie-ha}).

\medskip

On the spectral side, we have the stack of $L$-parameters $Z^1(W_E,\wh{G})/\wh{G}$ over $\qlb$, defined in \cite[\S VIII]{fargues-scholze}. Since it will figure extensively in this paper, we simplify the notation by defining
\[
\X := Z^1(W_E,\wh{G})/\wh{G}.
\]
We also let $X^\Box := Z^1(W_E,\wh{G})$ be the representation variety and let $X := Z^1(W_E,\wh{G}) \sslash \wh{G}$ be its GIT quotient (the character variety), which is a good moduli space for $\X$. The spectral Bernstein center $\ZZ^{\spec}(G) := \oo(X^\Box)^{\wh{G}}$ is then the ring of global functions on $\X$, and we let $\Perf(\X)$ denote the stable $\infty$-category of perfect complexes on $\X$. By \cite[Thm. X.0.1]{fargues-scholze}, the action of the Hecke operators on $\D_{\lis}(\Bun_G)$ is equivalent to an exact $\qlb$-linear monoidal functor
\[
\Perf(\X) \to \End_{\qlb}(\D_{\lis}(\Bun_G)^\omega),
\]
called the spectral action. 

\begin{rema}\label{reduction to the affine case}
The fact that $\X$ is not quasicompact is sometimes a nuisance. In particular, the spectral action is constructed by writing $\X$ as the union of quasicompact closed and open substacks $(\X^P)_P$ parametrized by the open subgroups $P$ of the wild inertia of $W_E$ which act trivially on $\wh{G}$. Write $X^P \sub X$ for the corresponding affine open and closed subset. For each $P$ there is an associated full subcategory $\D_{\lis}^P(\Bun_G)^\omega \sub \D_{\lis}(\Bun_G)^\omega$. These are direct summands and moreover $\D_{\lis}(\Bun_G)^\omega$ is the union of the $\D_{\lis}^P(\Bun_G)^\omega$. We let $\D_{\lis}^P(\Bun_G) \sub \D_{\lis}(\Bun_G)$ denote the full subcategory generated by $\D_{\lis}^P(\Bun_G)^\omega$ under filtered colimits. Then one has $\D_{\lis}(\Bun_G) = \varprojlim_P \D_{\lis}^P(\Bun_G)$. The spectral action is then constructed as a system of compatible actions
\[
\Perf(\X) \to \Perf(\X^P) \to \End_{\qlb}(\D_{\lis}^P(\Bun_G,)^\omega).
\]
In particular, they extend uniquely to colimit-preserving actions
\[
\QCoh(\X) \to \QCoh(\X^P) \to \End^L_{\qlb}(\D_{\lis}^P(\Bun_G)),
\]
and this induces an action $\QCoh(\X) \to \End^L_{\qlb}(\D_{\lis}(\Bun_G))$. See \cite[\S IX.5]{fargues-scholze} for precise definitions of the objects above. If $V \in \QCoh(\X)$, we will write $V \ast -$ or $\Act_V(-)$ for the corresponding endofunctor on $\D_{\lis}(\Bun_G)$, depending on which notation fits best with the situation at hand.
\end{rema}

The spectral action makes $\D_{\lis}(\Bun_G)$ into a module for $\QCoh(\X)$ in the sense of higher algebra (we refer to \cite{lurie-ha} for the notions of higher algebra). This module structure will allow us to apply the constructions of higher algebra to $\D_{\lis}(\Bun_G)$. Before doing so, we recall a few more properties. First, the Hecke action is recovered from the spectral action as the composition
\[
\Rep((\wh{G} \rtimes Q)^I) \to \Perf(\X)^{BW_E^I} \to \End_{\qlb}(\D_{\lis}(\Bun_G)^\omega)^{BW_E^I},
\]
where the second functor is induced from the spectral action. The first functor sends $V \in \Rep((\wh{G} \rtimes Q)^I)$ to the vector bundle $V \otimes \oo_{X^\Box}$ on $X^\Box$, with $W_E^I$-action given by 
\[
W_E^I \to (\wh{G} \rtimes Q)^I(\oo_{X^\Box}) \to \GL(V \otimes \oo_{X^\Box}),
\]
where the first map is the universal homomorphism in each factor, and the $\wh{G}$-descent datum comes from the diagonal embedding $\wh{G} \to (\wh{G} \rtimes Q)^I$. By looking at centres, the spectral action induces a ring homomorphism
\[
\ZZ^{\spec}(G) \to \ZZ^{\geom}_{\Hecke}(G) \sub \ZZ^{\geom}(G)
\]
to the geometric Bernstein center $\ZZ^{\geom}(G)$ of $G$, landing inside the subring $\ZZ^{\geom}_{\Hecke}(G)$ of endomorphisms equivariant for the Hecke action \cite[Def. IX.0.2]{fargues-scholze}. For any object $A \in \D_{\lis}(\Bun_G)$ we have a ring homomorphism $F_A : \ZZ^{\spec}(G) \to \End_{D_{\lis}(\Bun_G)}(A)$. When $A$ is Schur (i.e. $\End_{D_{\lis}(\Bun_G)}(A)=\qlb$), the kernel of $F_A$ corresponds to a $\qlb$-point of $X$, which we refer to as the Fargues--Scholze parameter of $A$. By \cite[Theorem 5.2.1]{zou2022categorical}, this construction is compatible with that given in \cite[\S IX.4]{fargues-scholze}. For $b\in B(G)$ and $\rho$ an irreducible admissible representation of $G_b(E)$, we will write ``the Fargues--Scholze parameter of $\rho$'' to mean the Fargues--Scholze parameter of $i^b_\ast \rho$\footnote{We note that using $i_\natural^b \rho$ instead of $i^b_\ast \rho$, as is done in \cite[IX.7]{fargues-scholze}, produces the same result (e.g. by the argument in the proof of Proposition \ref{linearity and induced maps}). Moreover, the Fargues--Scholze parameter of $i^b_\ast\rho$ is the composite of the ``correct'' Fargues--Scholze parameter of $\rho$ (defined using $i^1_! \rho$ in $\D_{\lis}(\Bun_{G_b})$ as in \cite[Def. IX.7.1]{fargues-scholze}) and the twisted $L$-embedding $\wh{G}_b \rtimes Q \to \wh{G} \rtimes Q$, by \cite[Thm IX.7.2]{fargues-scholze}.}. 

\medskip

Now consider a derived stack $\Y$ with a map $f : \mf{Y} \to \X$, which induces a symmetric monoidal functor $f^\ast : \QCoh(\X) \to \QCoh(\Y)$. Using the spectral action and $f^\ast$, we can then form the tensor product
\[
\QCoh(\Y) \otimes_{\QCoh(\X)} \D_{\lis}(\Bun_G)
\]
in higher algebra, which is a $\QCoh(\Y)$-module. The basic idea of this paper is that there should be plenty of interesting objects in $\D_{\lis}(\Bun_G)$ which have $\QCoh(\Y)$-structures (for suitable $\Y$), and that this can be used for concrete computations. 

\medskip

To keep the definitions of this paper close to its theorems, we make our definitions in a restricted setting, where things are more concrete. Consider the character variety $X$ from above. Let $Y$ be a closed subscheme of $X$, and consider its \emph{derived} pullback $\Y$ to $\X$. The structure sheaf $\oo_{\Y}$ is a commutative algebra object of $\QCoh(\X)$. Similarly, the structure sheaf $\oo_Y$ of $Y$ is a commutative algebra object of $\QCoh(X)$, and pullback gives a symmetric monoidal functor $\QCoh(X) \to \QCoh(\X)$ sending $\oo_Y$ to $\oo_{\Y}$. In particular, we may and will also view $\D_{\lis}(\Bun_G)$ as module for $\QCoh(X)$.

\begin{defi}\label{definition of categories}
Let $ Y \sub X$ be a closed subscheme and let $\Y$ be its derived pullback to $\X$. 
\begin{enumerate}
\item We define $\QCoh^{Y}(\X)$ to be the $\infty$-category $
\Mod_{\oo_{\Y}}(\QCoh(\X))$ of $\oo_{\Y}$-module objects in $\QCoh(\X)$. Equivalently, $\QCoh^{Y}(\X)$ is $\Mod_{\oo_Y}(\QCoh(\X))$, regarding $\QCoh(\X)$ as a module for $\QCoh(X)$. We also set $\QCoh^{Y}(X) := \Mod_{\oo_Y}(\QCoh(X))$.

\medskip

\item We define $\D_{\lis}^Y(\Bun_G)$ to be the $\infty$-category $\Mod_{\oo_{\Y}}(\D_{\lis}(\Bun_G))$ of $\oo_{\Y}$-module objects in $\D_{\lis}(\Bun_G)$. Equivalently, $\D_{\lis}^Y(\Bun_G)$ is $\Mod_{\oo_Y}(\D_{\lis}(\Bun_G))$, regarding $\D_{\lis}(\Bun_G)$ as a module for $\QCoh(X)$. 

\end{enumerate}

\medskip

When $Y=\{\varphi\}$ is a closed point, we will simply write $\QCoh^{\varphi}(\X)$ and $\D_{\lis}^\varphi(\Bun_G)$.
\end{defi}

\begin{rema}
We note that our $\D_{\lis}^\varphi(\Bun_G)$ differs from the category $\D_{\lis}(\Bun_G)_\varphi$ defined by the first author in \cite[Definition A.1]{hamann-lee}. An analogy is the following: If $A$ is a commutative ring and $\m \sub A$ is a maximal ideal, then $\D(A/\m)$ is to $\D(A)$ as our $\D_{\lis}^\varphi(\Bun_G)$ is to $\D_{\lis}(\Bun_G)$, whereas $\D(\wh{A}_{\m})$ is to $\D(A)$ as $\D_{\lis}(\Bun_G)_\varphi$ is to $\D_{\lis}(\Bun_G)$. Here $\wh{A}_{\m}$ is the complete local ring of $A$ at $\m$, and $\D(\dots)$ denotes the unbounded derived category of a ring (viewed as a stable $\infty$-category). We will not use the categories $\D_{\lis}(\Bun_G)_\varphi$ in this paper.  
\end{rema}
 
We refer to \cite[\S 4]{lurie-ha} for the theory of algebra objects and their modules in higher algebra. By \cite[Prop. 7.1.1.4]{lurie-ha}, all $\infty$-categories defined in Definition \ref{definition of categories} are stable $\infty$-categories (using exactness of the spectral action). Moreover, $\QCoh^{Y}(\X)$ is a symmetric monoidal $\infty$-category and $\D_{\lis}^Y(\Bun_G)$ is a module for it. Both may also viewed as modules for $\QCoh^{Y}(X)$ via $\QCoh^Y(X) \to \QCoh^Y(\X)$. We then have the following well known fact, which is a standard application of Lurie's Barr--Beck Theorem \cite[Thm. 4.7.3.5]{lurie-ha}.

\begin{prop}
$\QCoh^Y(X)$ and $\QCoh^{Y}(\X)$ are equivalent to $\QCoh(Y)$ and $\QCoh(\Y)$, respectively.
\end{prop}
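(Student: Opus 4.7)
Both equivalences are instances of the same argument, so I will describe the plan for the second one; the first follows by applying the same argument to the closed immersion $i : Y \hookrightarrow X$ in place of $j : \Y \hookrightarrow \X$.

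My plan is to invoke Lurie's Barr--Beck theorem \cite[Thm.~4.7.3.5]{lurie-ha} for the adjunction $j^\ast \dashv j_\ast$ between $\QCoh(\X)$ and $\QCoh(\Y)$. Since $j$ is the derived base change of the closed immersion $Y \hookrightarrow X$ along the good moduli space map $\X \to X$, it is itself a closed (in particular affine) immersion of derived stacks. From this I would deduce that $j_\ast$ is conservative and preserves all small colimits (in particular, the $j_\ast$-split geometric realizations needed for Barr--Beck). Applying the theorem, $\QCoh(\Y)$ is equivalent to the $\infty$-category $\Mod_T(\QCoh(\X))$ of modules for the monad $T = j_\ast j^\ast$ on $\QCoh(\X)$.

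To finish, I would identify this monad using the projection formula (valid since $j$ is affine), which supplies a natural equivalence
\[
j_\ast j^\ast F \simeq F \otimes_{\oo_\X} j_\ast \oo_\Y
\]
for $F \in \QCoh(\X)$, compatible with the monad structure when $j_\ast \oo_\Y$ is equipped with its canonical commutative algebra structure. Unwinding definitions, $j_\ast \oo_\Y$ is precisely the algebra object called $\oo_\Y$ in Definition \ref{definition of categories}, yielding
\[
\QCoh(\Y) \simeq \Mod_{\oo_\Y}(\QCoh(\X)) = \QCoh^Y(\X),
\]
as desired.

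The hardest part, in principle, is verifying the hypotheses of Barr--Beck and the projection formula in the derived, stacky, non-quasi-compact setting of \cite{fargues-scholze}. However, these are standard and essentially formal consequences of $j$ being affine, which is why the authors describe the proposition as well known; I do not expect any serious obstacle here.
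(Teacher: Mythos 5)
Your plan is exactly the "standard application of Lurie's Barr--Beck Theorem" that the paper invokes without further detail: monadicity of $j_\ast$ for the affine (closed) immersion $j$, plus the projection formula to identify the monad $j_\ast j^\ast$ with $-\otimes_{\oo_\X} j_\ast\oo_\Y$. The identification of $j_\ast\oo_\Y$ with the algebra object of Definition \ref{definition of categories} and the reduction of the non-quasi-compact issue to the components $\X^P$ are both handled correctly, so this matches the intended argument.
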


As a consequence, we get the following. 

\begin{coro}
Let $Y$ be a closed subvariety of $X$. Then the spectral action makes $\D_{\lis}^Y(\Bun_G)$ into a module for $\QCoh(\Y)$.
\end{coro}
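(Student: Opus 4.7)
The argument is essentially formal, using only the spectral action together with base change for modules in higher algebra. The plan is: (i) observe that $\oo_{\Y}$ is a commutative algebra object in $\QCoh(\X)$, (ii) invoke a general principle from \cite{lurie-ha} to promote the $\QCoh(\X)$-action on $\D_{\lis}(\Bun_G)$ to an action of $\Mod_{\oo_\Y}(\QCoh(\X))$ on $\Mod_{\oo_\Y}(\D_{\lis}(\Bun_G))$, and (iii) identify the acting category with $\QCoh(\Y)$ via the preceding proposition.

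More concretely, first I would note that since $\oo_Y \in \mathrm{CAlg}(\QCoh(X))$ and the monoidal pullback $\QCoh(X) \to \QCoh(\X)$ sends $\oo_Y$ to $\oo_{\Y}$, the object $\oo_{\Y}$ is canonically a commutative algebra in $\QCoh(\X)$. Second, I would appeal to the general fact (see \cite[\S 4.5]{lurie-ha}) that for any symmetric monoidal $\infty$-category $\mathcal{C}$, any $A \in \mathrm{CAlg}(\mathcal{C})$, and any $\mathcal{C}$-module $M$, the $\infty$-category $\Mod_A(\mathcal{C})$ is symmetric monoidal under $\otimes_A$, and $\Mod_A(M)$ is naturally a $\Mod_A(\mathcal{C})$-module (one way to see this is to observe that the action functor $\mathcal{C} \otimes M \to M$ descends to $\Mod_A(\mathcal{C}) \otimes \Mod_A(M) \to \Mod_A(M)$ via the relative tensor product). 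Applied with $\mathcal{C} = \QCoh(\X)$, $A = \oo_{\Y}$, and $M = \D_{\lis}(\Bun_G)$ equipped with the spectral action, this endows $\D_{\lis}^Y(\Bun_G) = \Mod_{\oo_{\Y}}(\D_{\lis}(\Bun_G))$ with the structure of a $\QCoh^Y(\X)$-module. Transporting this structure along the equivalence $\QCoh^Y(\X) \simeq \QCoh(\Y)$ furnished by the preceding proposition completes the argument.

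The main (and really only) obstacle is bookkeeping: locating in \cite{lurie-ha} the precise statement expressing the functoriality of $\Mod_A(-)$ in both $A$ and $M$, and verifying that the size/colimit issues arising from Remark \ref{reduction to the affine case} do not interfere with the construction. Since everything takes place in the $\infty$-category of stable $\infty$-categories with exact (or, after extension, colimit-preserving) functors, and since the spectral action has already been set up at both the $\Perf(\X)$- and $\QCoh(\X)$-level, no further geometric input is required; the proof is a direct application of the formalism.
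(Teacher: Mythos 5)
Your argument is exactly the one the paper intends: the paper already records (via \cite[Prop.\ 7.1.1.4]{lurie-ha} and the surrounding discussion) that $\QCoh^{Y}(\X)=\Mod_{\oo_{\Y}}(\QCoh(\X))$ is symmetric monoidal and acts on $\D_{\lis}^Y(\Bun_G)=\Mod_{\oo_{\Y}}(\D_{\lis}(\Bun_G))$, and the corollary is then immediate from the identification $\QCoh^{Y}(\X)\simeq\QCoh(\Y)$ of the preceding proposition. Your proposal is correct and follows the same route.
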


For computations, we will also need to consider the $\D_{\lis}(\Bun^b_G)$. Associated with $i^b : \Bun_G^b \to \Bun_G$ we have an adjoint pair of functors $(i^{b\ast},i_\ast^b)$, defined in \cite[\S VII.6]{fargues-scholze}. Since $\QCoh(X)$ is generated by $\oo_X$ under cones, shifts, retracts and filtered colimits, any idempotent complete, cocomplete stable subcategory of $\D_{\lis}(\Bun_G)$ will be preserved by the action of $\QCoh(X)$. We can therefore define a $\QCoh(X)$-action on $\D_{\lis}(\Bun_G^b)$ by declaring that
\[
i^b_{\ast} : \D_{\lis}(\Bun_G^b) \to \D_{\lis}(\Bun_G)
\]
is $\QCoh(X)$-linear.

\begin{prop}\label{linearity and induced maps}
The functor $i^{b\ast} : \D_{\lis}(\Bun_G) \to \D_{\lis}(\Bun_G^b)$ is $\QCoh(X)$-linear with respect to the action above. In particular, for any $Y\sub X$ as above, $i^{b\ast}$ and $i^b_\ast$ induce functors $\D_{\lis}^Y(\Bun_G)\to \D_{\lis}^Y(\Bun^b_G)$ and $\D_{\lis}^Y(\Bun^b_G)\to \D_{\lis}^Y(\Bun_G)$, which we will also denote by $i^{b\ast}$ and $i^b_\ast$, respectively.
\end{prop}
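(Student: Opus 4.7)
The plan is to establish the proposition in two stages: first the $\QCoh(X)$-linearity of $i^{b\ast}$, and then the lift of both $i^{b\ast}$ and $i^b_\ast$ to module categories, which will be formal.

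For the linearity I would apply the standard mate construction for adjunctions between module $\infty$-categories. Given that $i^b_\ast$ is $\QCoh(X)$-linear by construction, $i^{b\ast}$ becomes canonically \emph{lax} $\QCoh(X)$-linear: for each $V \in \QCoh(X)$ and $A \in \D_{\lis}(\Bun_G)$ one produces a natural map
\[
\phi_{V,A}\colon i^{b\ast}(V \ast A) \to V \ast i^{b\ast}(A)
\]
as the adjoint to the composite $V \ast A \to V \ast i^b_\ast i^{b\ast} A \simeq i^b_\ast(V \ast i^{b\ast} A)$ built from the unit of the adjunction and the linearity structure of $i^b_\ast$. The functor $i^{b\ast}$ is strictly $\QCoh(X)$-linear exactly when every $\phi_{V,A}$ is an equivalence. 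To verify this I would mimic the generator argument already used in the paragraph preceding the proposition: fix $A$ and let $\mathcal{S}_A \sub \QCoh(X)$ be the full subcategory of those $V$ for which $\phi_{V,A}$ is an equivalence. Since $i^{b\ast}$ is exact and colimit-preserving (as a left adjoint) and both actions are exact and continuous in $V$, the category $\mathcal{S}_A$ is closed under cones, shifts, retracts and filtered colimits; and one checks from the construction that $\phi_{\oo_X,A}$ is the identity of $i^{b\ast}A$, so $\oo_X \in \mathcal{S}_A$. Since $\oo_X$ generates $\QCoh(X)$ under these operations (as recalled in the paper), $\mathcal{S}_A = \QCoh(X)$, finishing the first part.

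For the induced functors I would invoke the general formalism of \cite[\S 4]{lurie-ha}: any linear functor between modules over a symmetric monoidal $\infty$-category passes to modules over any commutative algebra object. Applied to $\oo_Y \in \QCoh(X)$ and to the $\QCoh(X)$-linear functors $i^{b\ast}$ and $i^b_\ast$, this yields the induced functors between $\D_{\lis}^Y(\Bun_G) = \Mod_{\oo_Y}(\D_{\lis}(\Bun_G))$ and $\D_{\lis}^Y(\Bun_G^b) = \Mod_{\oo_Y}(\D_{\lis}(\Bun_G^b))$.

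The main obstacle, and really the only nontrivial point, is packaging $\phi_{V,A}$ as a genuinely $\infty$-categorically coherent natural transformation in both variables, so that the generator argument can be legitimately run. Once one commits to the setting of monoidal $\infty$-categories and their modules as in \cite{lurie-ha}, this is a standard application of the mate formalism for adjunctions of module functors, and the rest of the argument is routine bookkeeping.
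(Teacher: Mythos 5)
Your proof is correct, but it takes a different route from the paper's. The paper observes that the statement is essentially \cite[Cor. 6.2.4]{gaitsgory-notesondgcats} --- the adjoint of a linear functor between module categories over a \emph{rigid} monoidal category is automatically linear --- and, since $\QCoh(X)$ is not rigid ($X$ is not quasicompact), reduces to the rigid pieces $\QCoh(X^P)$ via Remark \ref{reduction to the affine case}. You instead construct the oplax $\QCoh(X)$-linear structure on the left adjoint $i^{b\ast}$ by the mate construction and prove strictness by a generator argument: the structure map $\phi_{V,A}$ is an equivalence for $V=\oo_X$, and the class of $V$ for which it is an equivalence is closed under cones, shifts, retracts and filtered colimits, which suffices because $\oo_X$ generates $\QCoh(X)$ under exactly these operations --- the very fact the paper invokes one paragraph earlier to define the action on $\D_{\lis}(\Bun_G^b)$ in the first place. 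Both arguments hinge on the same underlying mechanism (upgrading a lax/oplax module structure on an adjoint to a strict one); the difference is that rigidity establishes strictness via dualizability of compact objects, while you use generation by the unit. Your version buys a self-contained argument that needs neither the external citation nor the reduction to the quasicompact case; the cost is the coherence bookkeeping you correctly flag (packaging $\phi_{V,A}$ as a transformation of exact, colimit-preserving functors in $V$ at the $\infty$-categorical level, which is what makes the closure under cones and filtered colimits legitimate). The second part is handled identically in both: it is formal from linearity and the definition of $\D_{\lis}^Y$ as $\Mod_{\oo_Y}$.
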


\begin{proof}
The second part follows from the first part (and the definition, in the case of $i^b_\ast$). The first part is essentially follows from \cite[Cor. 6.2.4]{gaitsgory-notesondgcats}, except that $\QCoh(X)$ is not rigid since $X$ is not quasicompact. However, the categories $\QCoh(X^P)$ are rigid, so using Remark \ref{reduction to the affine case} one reduces to this case. We omit the details. 
\end{proof}

For our applications, we need a criterion for objects in $\D_{\lis}(\Bun_G)$ to have an $\oo_{Y}$-structure, phrased in terms of Fargues--Scholze parameters. For this, we will use the following lemma.

\begin{prop}\label{actions}
Let $k$ be a field and let $A$ and $B$ be $k$-algebras, with $A$ commutative. Let $F : \D(A) \to \End_k^L(\D(B))$ be a monoidal $k$-linear functor which commutes with colimits. Then: 

\begin{enumerate}
\item $F$ induces a ring homomorphism $f : A \to Z(B)$, where $Z(B)$ denotes the centre of $B$, and $F$ is given by $F(M) = ( N \mapsto M \otimes_{A,f}^L N)$, for $M \in \D(A)$ and $N \in \D(B)$.

\smallskip

\item Let $I \sub A$ be an ideal and let $M \in \D(B)$. Assume that $M$ has homology in a single degree. If the map $A \to \End_{D(B)}(M)$ factors through $A/I$, then $M$ has a canonical structure of an $A/I$-module, and hence lies in $\D(A/I \otimes_A^L B) = \Mod_{A/I}(\D(B))$. 

\end{enumerate}
\end{prop}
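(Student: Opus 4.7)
My plan for (1) is to invoke the universal property of $\D(A)$ as the free presentably symmetric monoidal $k$-linear stable $\infty$-category on the commutative $k$-algebra $A$: by \cite[\S 4.8]{lurie-ha}, a cocontinuous monoidal $k$-linear functor $\D(A) \to \mc{D}$ into a presentable monoidal $k$-linear stable $\infty$-category $\mc{D}$ is equivalent to an $E_\infty$-$k$-algebra map $A \to \End_{\mc{D}}(\mathbf{1})$, where $\mathbf{1}$ is the monoidal unit. Applied to $\mc{D} = \End_k^L(\D(B))$ under composition, whose unit is $\mathrm{id}_{\D(B)}$, this identifies $F$ with an $E_\infty$-$k$-algebra map $A \to \End(\mathrm{id}_{\D(B)})$. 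The target is the categorical center of $\D(B) = \Mod_B$; for discrete $B$ this complex is connective with $\pi_0 = Z(B)$, so since $A$ is discrete the map factors through $\pi_0$, yielding $f : A \to Z(B)$. The formula $F(M) \simeq M \otimes_{A,f}^L (-)$ then follows by comparing $F$ with the tautological $\D(A)$-action on $\D(B)$ induced by $f$: both are cocontinuous monoidal functors inducing the same map into $\End(\mathrm{id}_{\D(B)})$, hence equivalent by the universal property.

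For (2), since $M$ has homology in a single degree, I can replace $M$ by a discrete $B$-module $N$ (up to shift). Via $f : A \to Z(B)$, the $A$-action makes $N$ into a $B$-module with commuting $A$-action, and the hypothesis that $A \to \End_{D(B)}(M) = \End_B(N)$ factors through $A/I$ means $N$ is naturally a module over the discrete ring $B/IB$, where $IB \sub B$ is the two-sided ideal generated by $f(I)$. Now the derived tensor product $A/I \otimes_A^L B$ is a connective $E_1$-$k$-algebra with $\pi_0 = B/IB$, so pulling $N$ back along the canonical augmentation $A/I \otimes_A^L B \to B/IB$ gives $N$ a module structure over $A/I \otimes_A^L B$ in $\D(k)$. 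Invoking the asserted base-change equivalence $\D(A/I \otimes_A^L B) \simeq \Mod_{A/I}(\D(B))$, a special case of the formula for tensor products of module categories \cite[Thm. 4.8.4.6]{lurie-ha}, this gives the desired $A/I$-module structure on $M$ in $\D(B)$; canonicity follows because the entire construction is uniquely determined by $N$ together with its $B/IB$-action.

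The main obstacle will be keeping the higher-algebraic bookkeeping clean, since the universal property of $\D(A)$, the identification of $\End(\mathrm{id}_{\D(B)})$, and the base-change equivalence for module categories each need to be extracted from \cite{lurie-ha} and cited precisely. The role of the single-degree hypothesis in (2) is essential but also subtle: it ensures that the $A$-action on $M$ is fully captured by its action on the single nonzero cohomology group, so that the factorization hypothesis at the level of the triangulated category $D(B)$ actually lifts to a coherent $A/I$-structure in $\D(B)$ without higher-Ext obstructions.
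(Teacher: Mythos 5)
Your proposal is correct and follows essentially the same route as the paper: part (1) in both cases comes down to the fact that $F$ is determined by the induced algebra map $A \to \End(\mathrm{id}_{\D(B)})$ together with where it sends the unit (the paper extracts this from the identification $\End_k^L(\D(B)) \simeq \D(B \otimes_k B^{op})$ of colimit-preserving endofunctors with bimodules rather than from the universal property of $\D(A)$, but the content is the same), and part (2) is handled identically by restricting along the canonical map $A/I \otimes_A^L B \to B/IB$. One small correction: $\End(\mathrm{id}_{\D(B)})$, i.e.\ the Hochschild cochain complex of $B$, is \emph{coconnective} rather than connective, and the factorization through $\pi_0 = Z(B)$ holds precisely because $A$ is discrete (hence connective) while the target is coconnective, so the map factors through $\tau_{\geq 0}$ of the target, which is $Z(B)$.
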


\begin{proof}
We start with part (1). For any $k$-algebras $R$ and $S$, the $\infty$-category of colimit-preserving functors $\D(R) \to \D(S)$ is equivalent to $\D(S \otimes_k R^{op})$, or equivalently to the derived $\infty$-category $\D(S,R)$ of $(S,R)$-bimodules for which  the $k$-module structures coming from $S$ and $R$ agree. Here $X \in \D(S,R)$ corresponds to the functor $M \mapsto X\otimes_R^L M$, and for a given $F : \D(R) \to \D(S)$ the corresponding $X$ is $F(R)$, with the left $R^{op}$-module structure coming from the map
\[
R^{op}=\End_{\D(R)}(R) \to \End_{\D(S)}(F(R))
\] 
induced by $F$. When $R=S$, the monoidal structure on $\End^L_k(\D(R))$ by composition corresponds to the tensor product (over $R$) on $\D(R,R)$. Now consider the $F$ in the statement of the proposition. Using the above, we may think of it as a monoidal functor $\D(A) \to \D(B,B)$, and hence $F(A)$ is equivalent to $B$, the unit. Applying the above remarks again, now thinking of $\D(B,B)$ as $\D(B\otimes_k B^{op})$, $F$ itself is given by $M \mapsto B \otimes_A^L M$, with the $A$-module structure on the $(B,B)$-bimodule $B$ being given by the homomorphism
\[
A=\End_{\D(A)}(A) \to \End_{\D(B,B)}(B).
\]
Since $B$ is concentrated in degree $0$, $\End_{\D(B,B)}(B)$ is coconnective. Since $A$ is also concentrated in degree $0$, this means that the map $A \to \End_{\D(B,B)}(B)$ factors through the center $Z(B) = \pi_0(\End_{\D(B,B)}(B))$, giving the desired map $f$. Translating back from $\D(B,B)$ to $\End_k^L(\D(B))$ then gives the desired formula. 

\medskip

For part (2), by shifting we may assume that $M$ has homology in degree $0$, and so lies in the heart of $\D(B)$ with respect to the usual t-structure. The assumption then says that $M$ has a canonical $B/IB$-module structure in the usual sense. Restriction along the canonical map $A/I \otimes^L_A B \to B/IB$ then gives a canonical $A/I \otimes_A^L B$-module structure, as desired.
\end{proof}

We then have the following corollary, which gives us elements of $\D_{\lis}^\varphi(\Bun_G)$.

\begin{coro}\label{containmen in D-phi}
Let $\rho$ be an irreducible admissible $G_b(E)$-representation (in the abelian category, not the derived category) with Fargues--Scholze parameter $\varphi$. Then $i_\ast^b\rho$ naturally lives in $\D_{\lis}^\varphi(\Bun_G)$.  
\end{coro}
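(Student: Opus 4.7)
The plan is to first equip $\rho$ itself with a canonical $\oo_\varphi$-module structure inside $\D_{\lis}(\Bun_G^b) \simeq \D(G_b(E))$, and then transport this structure across $i^b_\ast$. By Proposition \ref{linearity and induced maps}, $i^b_\ast$ is $\QCoh(X)$-linear and sends $\D_{\lis}^\varphi(\Bun_G^b)$ into $\D_{\lis}^\varphi(\Bun_G)$, so once the module structure on $\rho$ is in place, the conclusion for $i^b_\ast \rho$ is automatic and canonical. The virtue of this reduction is that t-structure issues for $i^b_\ast$ are avoided; we only need the t-structure on $\D(G_b(E))$, where $\rho$ sits transparently in degree $0$.

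The core step is to apply Proposition \ref{actions}(2) to $\rho$. Via Remark \ref{reduction to the affine case}, $\rho$ lies in a fixed $P$-summand for some open subgroup $P$ of the wild inertia, on which the spectral action comes from $\QCoh(X^P) = \D(\oo(X^P))$, with $X^P$ affine; this puts us in the setting of Proposition \ref{actions}. Being an irreducible admissible smooth representation, $\rho$ is concentrated in a single degree of $\D(G_b(E))$, and by Schur's lemma $\End_{\D(G_b(E))}(\rho) = \qlb$. What remains is to verify that the induced character
\[
\oo(X^P) \to \End_{\D(G_b(E))}(\rho) = \qlb
\]
is the character $\chi_\varphi$ given by evaluation at $\varphi$. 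By the $\QCoh(X)$-linearity of $i^b_\ast$, any $a \in \oo(X^P)$ acting on $\rho$ by a scalar $c \in \qlb$ must also act on $i^b_\ast \rho$ by $c \cdot \mathrm{id}_{i^b_\ast \rho}$, so the two characters coincide. By definition, the Fargues--Scholze parameter of $\rho$ is the character attached to $i^b_\ast \rho$, which is $\chi_\varphi$ by hypothesis, so the map $\oo(X^P) \to \qlb = \End(\rho)$ indeed factors through $\oo_\varphi = \oo(X^P)/\m_\varphi$.

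Proposition \ref{actions}(2) then equips $\rho$ with a canonical $\oo_\varphi$-module structure, promoting it to an object of $\D_{\lis}^\varphi(\Bun_G^b)$. Applying the $\QCoh(X)$-linear functor $i^b_\ast$ delivers the asserted canonical lift of $i^b_\ast \rho$ to $\D_{\lis}^\varphi(\Bun_G)$. The only mildly delicate step is the identification of the character through which $\oo(X)$ acts on $\rho$ with $\chi_\varphi$, i.e.\ the compatibility between the two a priori different Fargues--Scholze characters one can read off (from $\rho$ in $\D(G_b(E))$ versus from $i^b_\ast\rho$ in $\D_{\lis}(\Bun_G)$); but this compatibility is a formal consequence of the $\QCoh(X)$-linearity of $i^b_\ast$ combined with Schur's lemma, and so does not present a genuine obstacle.
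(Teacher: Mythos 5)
Your proposal follows essentially the same route as the paper: apply Proposition \ref{actions}(2) to $\rho$ viewed inside $\D_{\lis}(\Bun_G^b) \simeq \D(G_b(E))$, then transport the resulting $\oo_Y$-module structure through the $\QCoh(X)$-linear functor $i^b_\ast$ via Proposition \ref{linearity and induced maps}. Your identification of the character $\oo(X^P) \to \End(\rho) = \qlb$ with evaluation at $\varphi$ is correct and is in fact spelled out more carefully than in the paper (it is essentially tautological, since the action on $\D(G_b(E))$ is \emph{defined} by declaring $i^b_\ast$ linear and $i^b_\ast$ is fully faithful, while the Fargues--Scholze parameter of $\rho$ is by definition that of $i^b_\ast\rho$). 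Likewise, your reduction to the affine piece $X^P$ addresses a point the paper leaves implicit on the source side of Proposition \ref{actions}.

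There is, however, one step you skip that the paper does not: Proposition \ref{actions} is stated for target categories of the form $\D(B)$ with $B$ a unital $k$-algebra, and $\D(G_b(E))$ is not literally of this form (it is the derived category of smooth representations, equivalently of non-degenerate modules over the non-unital full Hecke algebra, or a product over Bernstein blocks). The paper's proof fixes this by choosing $K$ small enough that $\rho$ is generated by its $K$-fixed vectors and working inside the fully faithful, colimit-preserving, idempotent-complete stable subcategory $\D(\mc{H}(G_b,K)) \sub \D(G_b(E))$; such a subcategory is automatically $\QCoh(X)$-stable (since $\QCoh(X)$ is generated by $\oo_X$ under cones, shifts, retracts and filtered colimits), so Proposition \ref{actions}(2) applies verbatim there. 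Your argument needs this (or an equivalent reduction to a single Bernstein block) to be literally licensed; with that patch inserted, the proof is complete and matches the paper's.
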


\begin{proof}
We may choose a sufficiently small compact open subgroup $K \sub G_b(E)$ such that $\rho$ is generated by its $K$-fixed vectors and we have a fully faithful exact colimit-preserving embedding $\D(\mc{H}(G,K)) \sub \D(G_b(E))$, where $\mc{H}(G,K)$ is the usual Hecke algebra of bi-$K$-invariant compactly supported $\qlb$-valued functions on $G_b(E)$. In particular, $\D(\mc{H}(G,K))$ inside $\D(G_b(E)) = \D_{\lis}(\Bun^b_G)$ is $\QCoh(X)$-stable and contains $\rho$, so we may apply Proposition \ref{actions} to deduce that $\rho \in \D_{\lis}^\varphi(\Bun^b_G)$. By Proposition \ref{linearity and induced maps}, we then have $i_\ast^b \rho \in \D_{\lis}^\varphi(\Bun_G)$.
\end{proof}

\section{Cohomology of moduli spaces of local shtukas}\label{sec: formula}

We now apply the constructions of the previous section to the cohomology of moduli space of local shtukas. Let $b\in B(G)$, let $I$ be a finite set and let $\mu_\bu = (\mu_i)_{i\in I}$ be a collection of conjugacy classes of cocharacters of $G$, with reflex fields $E_\bu = (E_i)_{i\in I}$. From the introduction we have the tower $(\Sht_{(G,b,\mu_\bu),K})_K$ of moduli spaces of local shtukas. The ``intersection cohomology complex'' of $\Sht_{(G,b,\mu_\bu),K}$ is the object $f_{K\natural}\mc{S}^\prime_W$ defined before \cite[Prop. IX.3.2]{fargues-scholze}, where $W \in \Rep(\wh{G}^I \rtimes Q_\bu )$ is the representation with highest weight $\mu_\bu = (\mu_i)_{i\in I}$. When $\mu_\bu = \mu$ is a single minuscule cocharacter this is simply the (shifted) compactly supported $\ell$-adic cohomology of the smooth rigid analytic variety $\Sht_{(G,b,\mu),K}$. Let $\rho$ be an irreducible admissible representation of $G_b(E)$, with Fargues--Scholze parameter $\varphi$. Consider the $\rho$-isotypic part 
\[
R\Gamma (G,b,\mu_\bu)[\rho] = \varinjlim_K \RHom_{G_b(E)}(f_{K\natural}\mc{S}^\prime_W,\rho)
\]
of the cohomology of the tower $(\Sht_{(G,b,\mu_\bu),K})_K$. By the proof of \cite[Prop. IX.3.2]{fargues-scholze} and the argument in the proof of \cite[Proposition 6.4.5]{hkw}, we have
\begin{equation}\label{Shtukas and Hecke}
R\Gamma(G,b,\mu_\bu)[\rho] \cong i^{1\ast} T_V i^{b}_\ast \rho,
\end{equation}
where $V=W^\vee \in \Rep(\wh{G}^I \rtimes Q_\bu )$ is the dual of the representation $W$ with highest weight $\mu_\bu = (\mu_i)_{i\in I}$, and this is a bounded complex of finite length $G(E)$-representations. Note that the restriction of $W$ to the diagonally embedded copy of $Z(\hat{G})^{Q}$ in $\Rep(\wh{G}^I \rtimes Q_\bu )$ is necessarily isotypic for some character $\chi$, and the functor $R\Gamma(G,b,\mu_\bu)[-]$ will therefore be identically zero unless $\chi$ equals the Kottwitz point of $b$ under the bijection $\pi_1(G)_{Q} = Z(\hat{G})^{Q}$, by \cite[Lemma 5.3.2]{zou2022categorical}. We will therefore assume this compatibility in the rest of the paper.

\medskip

Now set $\X_\varphi := {\varphi} \times_X \X$ (in the derived sense), viewing $\varphi$ as a closed point of $X$. To understand the cohomology, we have to analyze the expression $i^{1\ast} T_V i^{b}_\ast \rho$ more closely. By Corollary \ref{containmen in D-phi}, $i^b_\ast \rho \in \D_{\lis}^\varphi(\Bun_G)$. The operator $T_V$ then takes us from $\D_{\lis}^\varphi(\Bun_G)$ to $\D_{\lis}^\varphi(\Bun_G)^{BW_E^I}$, and is given by the image of $V$ under the composition 
\[
\Rep(\wh{G}^I \rtimes Q_\bu) \to \QCoh(\X_\varphi)^{BW_{E_\bu}} \to \End_{\qlb}(\D_{\lis}^\varphi(\Bun_G))^{BW_{E_\bu}},
\]
so to understand this better we need to understand the derived stack $\X_\varphi$. A priori, this can be a non-classical stack, and we will have little to say about this case in what follows --- to see the correct structure on $R\Gamma(G,b,\mu_\bu)[\rho]$ in that case one would want a refinement of Corollary \ref{containmen in D-phi}. We will focus on a particular class of parameters $\varphi$ that turn out to satisfy $\X_\varphi \cong [\ast/S_\varphi] $, where $S_\varphi$ is the centralizer of $\varphi$, viewed as a point of $\X$. Recall that semisimple $L$-parameters over $\qlb$ (up to conjugacy) precisely correspond to closed points of $X$. Before defining the classes of parameters that we will be interested in, we state some algebro-geometric conventions that we will use.

\begin{conv}\label{alg geom convs}
When $Y$ is a (classical) scheme of finite type over $\qlb$, or a disjoint union of such schemes, we will often think of it as its set of $\qlb$-points (with the Zariski topology) as in classical algebraic geometry. In particular, ``the locus of points of $Y$ satisfying a particular property'' refers to the subset of $\qlb$-points with that property. Moreover, $Y_{red}$ will denote the nilreduction of $Y$. Whenever $f : Y_1 \to Y_2$ is a finite type morphism of such schemes and $S\sub Y_2$ is a subscheme, we write $f^{-1}(S)$ for the (non-derived) fibre product $S \times_{Y_2} Y_1$. If $S=\{y\}$ is a closed point, we write $f^{-1}(y)$ instead of $f^{-1}(\{y\})$.
\end{conv}

We now return to the context of moduli spaces and stacks of $L$-parameters. In what follows, we write $\pi : X^\Box \to X$ for the map from $X^\Box$ to its GIT quotient $X$. 

\begin{defi}\label{def: generous}
Let $\varphi$ be a semisimple $L$-parameter over $\qlb$.
\begin{enumerate}
\item We say that $\varphi$ is strongly semisimple if $\pi^{-1}(\varphi)_{red}$ consists of a single $\wh{G}$-orbit.

\item We say that $\varphi$ is generous if $\pi^{-1}(\varphi)$ is reduced and consists of a single $\wh{G}$-orbit.
\end{enumerate}
\end{defi}

Concretely, $\varphi$ is strongly semisimple if there are no other $L$-parameters with the same semisimplification as $\varphi$. We have the following lemma:

\begin{lemm}\label{ss implies smooth}
Let $\varphi$ be a strongly semisimple $L$-parameter. Then $\varphi$ defines a smooth point of $X^\Box$ or, equivalently, of $\X$.
\end{lemm}

\begin{proof}
By \cite[\S VIII.2]{fargues-scholze}, the tangent complex of $\X$ at $\varphi$ is $R\Gamma(W_E, \mathrm{ad}(\varphi))[1]$, which is concentrated in $[-1,1]$, and the obstruction group is $H^2(W_E,\mathrm{ad}(\varphi))$. Thus, it suffices to prove that $H^2(W_E,\mathrm{ad}(\varphi))=0$. By \cite[Prop. VIII.2.2]{fargues-scholze}, we have
\[
H^2(W_E,\mathrm{ad}(\varphi)) \cong H^0(W_E,\mathrm{ad}(\varphi)(1))^\ast,
\]
where $-^\ast$ denotes the $\qlb$-linear dual. Since we are in characteristic $0$, we can interpret $L$-parameters as Weil--Deligne parameters (cf. \cite[Def. VIII.2.4, Prop. VIII.2.5]{fargues-scholze}) and, since $\varphi$ is semisimple, $H^0(W_E,\mathrm{ad}(\varphi)(1))$ is precisely the set of $N \in \Lie(\wh{G})$ such that $(\varphi,N)$ is a Weil--Deligne parameter. Any non-zero such $N$ defines an non-semisimple $L$-parameter whose semisimplification is $\varphi$, so such $N$ cannot exist since $\varphi$ is strongly semisimple. It follows that $H^0(W_E,\mathrm{ad}(\varphi)(1))=0$, and $\varphi$ is a smooth point of $\X$.
\end{proof}

For any semisimple $L$-parameter $\varphi$, the underlying classical stack $\X_\varphi^{cl}$ of $\X_\varphi$ is $[\pi^{-1}(\varphi)/\wh{G}$], so by definition $\varphi$ is generous if and only if $\X_\varphi^{cl} \cong [\ast / S_\varphi]$. In fact, we will now show that $\X_{\varphi}$ is classical for generous $\varphi$. Recall our conventions from Convention \ref{alg geom convs}. We isolate a general geometric invariant theory result from the argument. For this, we will need some basic notions and results from affine geometric invariant theory. We refer the reader to \cite[\S 1.2]{mumford-git} and \cite[Thm. 3.5]{newstead-introtomoduli} for the basic properties that we will use. 

\begin{lemm}\label{key GIT}
Let $S$ be an irreducible affine $\qlb$-variety with an action of $\wh{G}$ and let $T = S \sslash \wh{G}$ be its GIT quotient. Write $f : S \to T$ for the quotient map. Let $U\sub S$ denote the union of the $\wh{G}$-orbits which are closed and not contained in the closure of any other $\wh{G}$-orbit. Then there exists an open subset $V\sub T$ such that $U = f^{-1}(V)$, and $f : U \to V$ is a universal geometric quotient. 
\end{lemm}

\begin{proof}
Consider the set $U_1 \sub S$ of points whose orbits have maximal dimension, and denote this maximal dimension by $d^\prime$. By \cite[Lem. 3.7(c)]{newstead-introtomoduli}, $U_1$ is open. Next, let $U_2 \sub U_1$ be the subset of points whose orbits are closed. By \cite[Prop. 3.8]{newstead-introtomoduli}, $U_2 = f^{-1}(V)$ for some open $V \sub T$ , and $U_2 \to V$ is a universal geometric quotient\footnote{For the reader looking at this reference, we note that the notions of orbit spaces and geometric quotients, and their relation, is covered in \cite[p. 31, Def.]{newstead-introtomoduli} and \cite[p. 57, Def.]{newstead-introtomoduli}, and that geometric quotients in characteristic $0$ in affine GIT are universal \cite[Amplification 1.3]{mumford-git}.}. Thus, it suffices to show that $U=U_2$. To show that $U_2 \sub U$, note that any orbit in $U_2$ is closed and cannot be in the closure of any other orbit, since such an orbit would need to have bigger dimension, which is impossible by the definition of $U_2$.

\medskip

It therefore remains to show that $U\sub U_2$. Consider the set $U_3 \sub S$ of $x\in S$ for which the dimension of the local ring $\oo_{f^{-1}(f(x)),x}$ of the fibre $f^{-1}(f(x))$ is minimal. By \cite[\S I.8, Thm. 3 Cor. 3]{mumford-redbook}, $U_3$ is open. Call this minimal dimension $d$. Let $x \in U$. By definition of $d^\prime$, we must have $d^\prime \geq \dim \wh{G}x$. On the other hand, $\wh{G}x = f^{-1}(f(x))$ by the definition of $U$ and it is equidimensional (since it is a single orbit), so we must have $\dim \wh{G}x \geq d$ by the definition of $d$. In particular, we have $d \leq d^\prime$. On the other hand, we have $U_1 \cap U_3 \neq \emptyset$ since $U_1$ and $U_3$ are open and non-empty, and $S$ is irreducible. Choose a point $y \in U_1 \cap U_3$. Then $d = \dim \oo_{f^{-1}(f(y)),y}$ and $d^\prime = \dim \wh{G}y$ by definition. Since $\wh{G}y \sub f^{-1}(f(y))$ and $\wh{G}y$ is equidimensional, we must have $d^\prime \leq d$, and hence we conclude that $d = d^\prime$ and hence that $U \sub U_1$ (since we proved that $d \leq  \dim \wh{G}x \leq d^\prime$ for $x\in U$). Since the orbits in $U$ are closed, we conclude that $U \sub U_2$, as desired, finishing the proof.
\end{proof}

\begin{prop}\label{generous implies flat}
Let $X^\Box_{ss}$ and $X_{ss}$ denote the loci of strongly semisimple $L$-parameters in $X^\Box$ and $X$, respectively. Then $X_{ss}$ is open, $X_{ss}^\Box = \pi^{-1}(X_{ss})$, and $X_{ss}^\Box \to X_{ss}$ is a universal geometric quotient. Moreover, if $\varphi$ is generous, then there is an open neighbourhood $W\sub X$ of $\varphi$ such that $\pi : \pi^{-1}(W) \to W$ is flat.
\end{prop}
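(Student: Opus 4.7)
My plan is to reduce the entire proposition to a linear-algebraic computation on an \'etale slice, via Luna's \'etale slice theorem. Fix a strongly semisimple $\varphi$. The stabilizer $S_\varphi$ is reductive (as the centralizer in $\wh{G}$ of the image of a semisimple $L$-parameter), the $\wh{G}$-orbit of $\varphi$ is closed, and by the footnote $\varphi$ is a smooth point of $X^\Box$. Working inside one of the quasicompact affine pieces $X^{\Box,P}$ and passing to the $\wh{G}$-invariant open smooth locus, I would invoke Luna's theorem to produce an \'etale $\wh{G}$-equivariant morphism
\[
\wh{G}\times^{S_\varphi} N \longrightarrow X^\Box
\]
from an \'etale neighbourhood of $(e,\varphi)$ onto a neighbourhood of $\wh{G}\cdot\varphi$, with $N$ an $S_\varphi$-stable smooth subvariety through $\varphi$; since $N$ is smooth at $\varphi$ with reductive $S_\varphi$ acting, after shrinking I can arrange that $N$ is $S_\varphi$-equivariantly \'etale over---hence replaceable by---the linear $S_\varphi$-representation $T_\varphi N$. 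Correspondingly $X$ is \'etale-locally $N\sslash S_\varphi$ near $\pi(\varphi)$, and $\pi$ becomes the composite $\wh{G}\times^{S_\varphi} N \to N \to N\sslash S_\varphi$.

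For the first three assertions, strong semisimplicity of $\varphi$ translates in the slice picture to the reduced nullcone $\pi_N^{-1}(0)_{\mathrm{red}}$ being $\{0\}\subset N$. Standard GIT then identifies the locus of points $n\in N$ with this property---equivalently, the locus on which the reduced $\pi_N$-fibre through $n$ is a single $S_\varphi$-orbit---as an $S_\varphi$-invariant open subset $N_{ss}\subset N$, on which $N_{ss}\to N_{ss}\sslash S_\varphi$ is a universal geometric quotient. Transporting this back through the Luna cover then yields openness of $X_{ss}$, the identification $X_{ss}^\Box = \pi^{-1}(X_{ss})$, and the universal geometric quotient property.

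For the flatness statement I now assume $\varphi$ is generous, so $\pi_N^{-1}(0) = \{0\}$ scheme-theoretically. Writing $\mathcal{O}(N) = \qlb[x_1,\ldots,x_n]$ with $S_\varphi$ acting linearly and preserving the standard grading, the generosity condition reads $\mathcal{O}(N)/\mathcal{O}(N)^{S_\varphi}_+\cdot\mathcal{O}(N) = \qlb$. A direct induction on degree (equivalently, graded Nakayama applied to the connected graded ring $\mathcal{O}(N)$ and its graded subring of invariants) then forces $\mathcal{O}(N) = \mathcal{O}(N)^{S_\varphi}$, i.e.\ $S_\varphi$ acts trivially on $N$. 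The \'etale-local model thus collapses to
\[
\wh{G}\times^{S_\varphi} N \;\cong\; (\wh{G}/S_\varphi)\times N \longrightarrow N \;\cong\; X,
\]
which is the projection away from the smooth factor $\wh{G}/S_\varphi$, manifestly flat. \'Etale descent of flatness together with openness of the flat locus then produces a Zariski-open neighbourhood $W\ni\varphi$ on which $\pi$ is flat.

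The main obstacle I anticipate is the careful deployment of Luna's \'etale slice theorem in this setting---specifically, arranging after \'etale base change that the slice becomes an honest linear $S_\varphi$-representation so that the graded Nakayama step is available. Once this reduction is in hand, the GIT openness and geometric quotient statement, the passage from generosity to triviality of the slice action, and the flatness of a trivial $\wh{G}/S_\varphi$-bundle are all short and essentially formal.
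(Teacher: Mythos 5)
Your approach is correct in outline but genuinely different from the paper's. The paper never invokes Luna's slice theorem: it works component by component, uses the irreducibility and reducedness of the components of $X$ (from Dat--Helm--Kurinczuk--Moss), Newstead's openness results for the locus of maximal-dimensional orbits and for the locus of closed orbits among those, and a dimension count ($d=d'$) to identify $C_{ss}^\Box$ with that open saturated locus; flatness is then extracted from the fibrewise criterion \cite[Cor. (15.2.3)]{ega4-3} (universally open $+$ equidimensional $+$ reduced base $+$ reduced fibre $\Rightarrow$ flat near the fibre). Your route trades this for Luna's theorem at the smooth point $\varphi$ with closed orbit and reductive stabilizer, plus a graded Nakayama computation on the coinvariant algebra of the slice representation; the payoff is a strictly stronger conclusion, namely that for generous $\varphi$ the group $S_\varphi$ acts trivially on the slice, so $\pi$ is \emph{smooth} (not merely flat) near $\pi^{-1}(\varphi)$ with fibres $\wh{G}/S_\varphi$, and $X$ is smooth at $\varphi$ --- which also reproves the corollary $\X_\varphi=[\ast/S_\varphi]$ directly. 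The graded Nakayama step itself is fine: $A=B_+A+B$ gives $A/B=B_+(A/B)$, hence $A=B$.

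Three points need to be firmed up. First, the ``standard GIT'' claim in your second paragraph is essentially the content of the first assertion of the proposition, so you cannot just cite it; but in the linearized slice picture it reduces to the genuinely elementary fact (Hilbert--Mumford, or a weight argument on a maximal torus) that a linear representation of a reductive group whose reduced nullcone is $\{0\}$ has trivial action of the identity component --- after which $N_{ss}=N$, all orbits are finite and closed, and the quotient is by the finite group $\pi_0(S_\varphi)$, making openness and the (universal) geometric quotient property immediate. You should say this. Second, the translation of generosity into ``the scheme-theoretic nullcone of $T_\varphi N$ is the reduced point $0$'' requires tracking the fibre of $\pi$ through the two Cartesian squares of Luna's theorem: the fibre of $S\to S\sslash S_\varphi$ over $[\varphi]$ is faithfully flat (étale surjective) over the nullcone of $T_\varphi N$, and reducedness descends along faithfully flat maps; this works but is where the argument actually uses that the image of the slice is saturated. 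Third, triviality of the $S_\varphi$-action on $T_\varphi N$ only gives triviality on $N$ after linearizing the finite residual action at the fixed point $\varphi$ (harmless in characteristic $0$, but worth a sentence). With these filled in, your proof is complete.
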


\begin{proof}
We start with the first part. As a first reduction step, note that we may prove the proposition component by component on $X$, so let $C \sub X$ be a connected component and set $C^\Box = \pi^{-1}(C)$; this is connected since $\wh{G}$ is. Moreover, if there are no strongly semisimple parameters on $C$ then the assertion is trivial, so assume that $C_{ss} := C \cap X_{ss} \neq \emptyset$. By \cite[Thm. 1.7]{dhkm}, $C$ is irreducible and reduced. However, $C^\Box$ might not be irreducible, preventing us from applying Lemma \ref{key GIT} directly (note that the condition on the orbits included in $U$ in the notation of Lemma \ref{key GIT} precisely corresponds to the definition of strong semisimplicity). Instead, we will show that the Proposition follows by applying Lemma \ref{key GIT} to a particular irreducible component of $C^\Box$. 

\medskip

Write $C^\Box = Y_1 \cup Y_2 \cup \dots \cup Y_r$ as the union of its irreducible components. Each component is $\wh{G}$-invariant, so the sets $\pi(Y_i)$ are closed. Since $\pi(C^\Box)=C$, we may without loss of generality assume that $\pi(Y_1) = C$. By exactness of $\wh{G}$-invariants, it follows that $C$ is the GIT quotient of $Y_1$ as well. If $C_{ss}^\Box := \pi^{-1}(C_{ss}) \sub Y_1$, then we can apply Lemma \ref{key GIT} to conclude the first part of the Proposition. To see that $C_{ss}^\Box \sub Y_1$, note that each orbit in $C_{ss}^\Box$ has to intersect $Y_1$ since the orbits are closed and $C_{ss} \sub \pi(Y_1)$, and then use $\wh{G}$-invariance of $Y_1$\footnote{In fact, since the points in $C_{ss}^\Box$ are smooth by Lemma \ref{ss implies smooth}, they only lie on $Y_1$. This means that $Y_1$ is the only component of $C^\Box$ mapping surjectively onto $C$.}. This finishes the proof of the first part.

\medskip

For the second part, note that we have shown that $C_{ss}^\Box \to C_{ss}$ is equidimensional and universally open (since it is a universal geometric quotient), and we know that $C_{ss}$ is reduced since $C$ is. So assume that $\varphi$ is a generous parameter in $C$. Then $\pi^{-1}(\varphi)$ is reduced by assumption, so by \cite[Cor. (15.2.3)]{ega4-3}, it follows that $C_{ss}^\Box \to C_{ss}$ is flat in a neighbourhood of $\pi^{-1}(\varphi)$. We may then spread out this neighbourhood using the $\wh{G}$-action (since we have a geometric quotient) and take the image under $\pi$ to obtain the desired $W$.
\end{proof}

\begin{coro}
If $\varphi$ is a generous $L$-parameter, then $\X_\varphi = \X_\varphi^{cl} = [\ast/S_\varphi]$. Moreover, for any strongly semisimple $L$-parameter $\varphi$, $\pi$ is flat in a neighbourhood of $\varphi \in X$ if and only if $\varphi \in X$ is smooth. 
\end{coro}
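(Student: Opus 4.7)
The plan is to handle the two assertions separately, feeding Proposition \ref{generous implies flat} into standard commutative-algebra facts about flat morphisms between regular local rings.

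For the first assertion, I would compute $\X_\varphi$ directly. Writing $\X = [X^\Box/\wh{G}]$ and using that formation of the derived fibre $\{\varphi\}\times^L_X(-)$ commutes with the stack quotient, one has
\[
\X_\varphi = [(\{\varphi\}\times^L_X X^\Box)/\wh{G}].
\]
Proposition \ref{generous implies flat} supplies a neighbourhood of $\varphi$ on which $\pi:X^\Box\to X$ is flat, so the derived fibre coincides with the classical fibre $\pi^{-1}(\varphi)$. By the definition of generous, this classical fibre is reduced and equals the single $\wh{G}$-orbit $\wh{G}/S_\varphi$, so quotienting by $\wh{G}$ gives $[\ast/S_\varphi]$, which shows simultaneously that $\X_\varphi$ is classical and computes it.

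For the second assertion, I would reduce to a statement about the local rings of $X^\Box$ and $X$ at $\varphi$ and $\pi(\varphi)$, using the footnote that strongly semisimple $\varphi$ is a smooth (equivalently, regular) point of $X^\Box$. For the implication ``flat $\Rightarrow$ smooth'', I would cite Matsumura's criterion (\emph{Commutative Ring Theory}, Thm.~23.7): for a flat local homomorphism $A\to B$ of Noetherian local rings, $B$ is regular if and only if $A$ and the closed fibre $B/\m_A B$ are both regular. Applied to $\oo_{X,\pi(\varphi)}\to \oo_{X^\Box,\varphi}$, regularity of the target forces regularity of the source, which is smoothness in our finite-type setting over $\qlb$.

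For the converse, I would invoke miracle flatness (Matsumura, Thm.~23.1). If $X$ is smooth at $\varphi$ then $X_{ss}$ is regular there, and the proof of Proposition \ref{generous implies flat} already shows that $\pi:X^\Box_{ss}\to X_{ss}$ has smooth (hence Cohen--Macaulay) source and equidimensional fibres of constant dimension $d$, with the dimension formula $\dim X_{ss}^\Box = \dim X_{ss} + d$ holding since $\pi|_{X^\Box_{ss}}$ is a geometric quotient. These are exactly the hypotheses of miracle flatness, so $\pi$ is flat in an open neighbourhood of $\varphi$ in $X_{ss}$, and hence in $X$ since $X_{ss}$ is open. The main subtlety is in the first assertion, where one must legitimately commute the derived fibre product past the stack quotient; the second assertion is essentially routine once the correct citations from commutative algebra are in hand.
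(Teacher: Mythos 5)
Your proof is correct and follows essentially the same route as the paper: the first assertion is exactly the flatness statement of Proposition \ref{generous implies flat} combined with the definition of generous (note the corollary's ``$[\ast/\wh{G}]$'' is a typo for $[\ast/S_\varphi]$, which is what you correctly compute), and the second assertion is proved by the same two citations to Matsumura (Thm.~23.1 for smooth~$\Rightarrow$~flat via miracle flatness, Thm.~23.7(i) for flat~$\Rightarrow$~smooth). Your write-up is in fact slightly more explicit than the paper's about commuting the derived fibre past the quotient and about verifying the dimension hypothesis of miracle flatness.
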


\begin{proof}
We have already proved the first part. For the second part, we may work component by component, and we use the notation from the proof of Proposition \ref{generous implies flat}. There we have showed that $\pi : C^\Box_{ss} \to C_{ss}$ is equidimensional, and we know that $C^\Box_{ss}$ is smooth. That smoothness of $C_{ss}$ at $\varphi$ implies flatness of $\pi$ at $\varphi \in C_{ss}^\Box$ then follows from miracle flatness \cite[Thm. 23.1]{matsumura}, and the converse follows from \cite[Thm. 23.7(i)]{matsumura}.
\end{proof}

\begin{rema}\label{non-generous parameters} Let us make some remarks on the prevalence of generous parameters. Here, we think of $L$-parameters as Weil--Deligne parameters.

\begin{enumerate}
\item In general, there is a Zariski open and dense subset of generous parameters inside $X_{ss}$ by \cite[Tag 0578]{stacks-project} (the full set of generous parameters is locally constructible by \cite[Tag 0579]{stacks-project}, but we do not know if it is open).

\smallskip

\item Let us indicate a case when strongly semisimple and generous parameters can be described rather explicitly. Assume that $G$ is split, and consider the unramified (or principal) component $\X_{ur}$ of $\X$, consisiting of parameters $(\varphi,N)$ for which $\varphi$ is unramified (and hence equivalent to a single element in $\wh{G}$), which is studied in detail in \cite[\S 2]{hellmann-derived}. Write $X^\Box_{ur}$ and $X_{ur}$ for the corresponding components of $X^\Box$ and $X$, respectively. As noted in a footnote in the proof of Proposition \ref{generous implies flat}, $\X_{ur}$ has a unique irreducible component mapping surjectively onto the GIT quotient, and it contains all strongly semisimple parameters. In this case, it is the component where $N=0$, hence isomorphic to $[\wh{G}/\wh{G}]$ (the action being by conjugation). The strongly semisimple parameters correspond to those regular semisimple elements in $\wh{G}$ which do not lie on other components of $\X_{ur}$. By \cite[6.4, Cor.]{steinberg}, we then have
\begin{equation}\label{Steinberg map}
X_{ur}^\Box = \wh{G} \to X_{ur} = \wh{G}\sslash \wh{G} \cong \wh{T}\sslash N(\wh{T}) = \wh{T} \sslash W,
\end{equation}
where $\wh{T}$ is a maximal torus of $\wh{G}$, with normalizer $N(\wh{T})$, and $W=N(\wh{T})/\wh{T}$ is the Weyl group. It is well known in the theory of algebraic groups how this map can be analysed, but let us sketch one way. The morphism $[\wh{T}/N(\wh{T})] \to [\wh{G}/\wh{G}]$ of quotient stacks (induced by $\wh{T}\sub \wh{G}$) is an isomorphism over the regular semisimple locus (on both sides). In particular, in the smooth topology and over the regular semisimple locus, the fibres of the map in equation (\ref{Steinberg map}) are equivalent to the fibres of $\wh{T} \to \wh{T}\sslash W$. The latter are smooth if and only if they are \'etale, i.e. when the centralizer of the regular semisimple element is $\wh{T}$ itself. In particular, this is always the case when the derived group of $\wh{G}$ is simply connected \cite[2.10, Rem.]{steinberg}. In that case, all strongly semisimple unramified parameters are generous. 

\medskip

Let us also note that the smoothness of $X_{ur}$ at a point is well understood in this situation. Write $X_{ur}=\wh{T}\sslash W$ and consider $t \in \wh{T}$, with stabilizer $W_t \sub W$, and image $\ol{t}\in \wh{T} \sslash W$. Then, by the Shephard--Todd theorem \cite[Thm. 7.2.1]{benson}, $\ol{t}$ is a smooth point if and only if the action of $W_t$ of the tangent space $T_t(\wh{T})$ of $\wh{T}$ at $t$ is generated by pseudoreflections\footnote{The analysis of the local geometry of $\wh{T} \sslash W$ at $\ol{t}$ in terms of the action of $W_t$ on $T_t(\wh{T})$ is a standard application of Luna's \'etale slice theorem and related results, cf. \cite[\S 2.2, \S 3.1]{luna}.}. In particular, we can consider the following examples: Let $n\geq 2$ and let $\zeta$ be a primitive $n$-th root of unity. The unramified parameter $\varphi_n$ sending Frobenius to $(1,\zeta,\dots,\zeta^{n-1}) \in \PGL_n(\qlb)$ (with $N=0$) is strongly semisimple, but $S_\varphi$ is cyclic of order $n$, so $\varphi_n$ is not generous. Moreover, a tedious but straightforward calculation shows that it is a smooth point in $X_{ur}$ if and only if $n=2$.

\smallskip

\item The previous part of the remark shows that all unramified strongly semisimple $L$-parameters for $\GL_n$ are generous. In fact, one may bootstrap this to show that \emph{all} strongly semisimple $L$-parameters for $\GL_n$ are generous. This follows from the fact that any component of $\X$ (for $\GL_n$) is isomorphic to a product of unramified components for some $\GL_d$'s (possibly for an extension of $F$). For a reference for this, see the very end of \cite[\S 5.2]{bzchn}.
\end{enumerate}

\end{rema}

We now return to analyzing $R\Gamma(G,b,\mu_\bu)[\rho]$, under the assumption that the Fargues--Scholze parameter $\varphi$ of $\rho$ is generous. Since $\X_\varphi \cong [\ast/S_\varphi]$, we have $\QCoh(\X_\varphi) \cong \Rep(S_\varphi)$ and the action of $V$ on $\D_{\lis}^\varphi(\Bun_G)$ factors as 
\begin{equation}\label{generic factorization}
\Rep(\wh{G}^I \rtimes Q_\bu) \to \Rep(S_\varphi)^{BW_{E_\bu}} \to \End_{\qlb}(\D_{\lis}^\varphi(\Bun_G)^\omega)^{BW_{E_\bu}}.
\end{equation}
By the compatibility between $b$ and $\mu_\bu$ (as recalled in the beginning of this section), $Z(\wh{G})^Q \sub S_\varphi$ acts on $V$ via a character $\chi_b$. Write $\Irr(S_\varphi,\chi_b)$ for the set of irreducible representations of $S_\varphi$ on which $Z(\wh{G})^Q$ acts by $\chi_b$. Also write $V_\varphi$ for the image of $V$ under $\Rep(\wh{G}^I \rtimes Q_\bu) \to \Rep(S_\varphi)^{BW_{E_\bu}}$. We can now put everything together to derive the desired formula.

\begin{theo}\label{main formula}
Let $\rho$ be an irreducible admissible representation of $G_b(E)$ and assume that its Fargues--Scholze parameter $\varphi$ is generous. Decompose $V_\varphi$ in $\Rep(S_\varphi)^{BW_{E_\bu}}$ as $\bigoplus_{\delta\in \Irr(S_\varphi,\chi_b)} \delta \boxtimes V_{\varphi,\delta}$. Then we have an isomorphism
\[
R\Gamma(G,b,\mu_\bu)[\rho] \cong \bigoplus_{\delta\in \Irr(S_\varphi,\chi_b)} i^{1\ast}\Act_{\delta}(i^b_\ast \rho) \boxtimes V_{\varphi,\delta}
\]
in $\D^\varphi(G(E))^{BW_{E_\bu}}$, and each $i^{1\ast}\Act_{\delta}(i^b_\ast\rho)$ is a bounded complex of finite length $G(E)$-representations.
\end{theo}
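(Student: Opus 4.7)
The plan is to combine equation \eqref{Shtukas and Hecke}, Corollary \ref{containmen in D-phi}, and the genericity hypothesis to factor the Hecke action through $\Rep(S_\varphi)$. By \eqref{Shtukas and Hecke}, $R\Gamma(G,b,\mu_\bu)[\rho] \cong i^{1\ast} T_V(i^b_\ast\rho)$; by Corollary \ref{containmen in D-phi}, $i^b_\ast\rho$ lies in $\D_{\lis}^\varphi(\Bun_G)$; and generosity of $\varphi$ forces $\X_\varphi \cong [\ast/S_\varphi]$, so $\QCoh(\X_\varphi) \cong \Rep(S_\varphi)$ and the Hecke action on $i^b_\ast\rho$ is governed by the factorization \eqref{generic factorization}.

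Next, the hypothesized decomposition $V \cong \bigoplus_\delta \delta \boxtimes V_{\varphi,\delta}$ in $\Rep(S_\varphi)^{BW_{E_\bu}}$ is preserved by the second $\qlb$-linear monoidal functor in \eqref{generic factorization}, which sends each $\delta$ (with trivial $W_{E_\bu}$-action) to the endofunctor $\Act_\delta$ and treats $V_{\varphi,\delta}$ as a constant $W_{E_\bu}$-representation acting by external tensor product. Additivity then yields
\[
T_V(i^b_\ast\rho) \cong \bigoplus_{\delta \in \Irr(S_\varphi,\chi_b)} \Act_\delta(i^b_\ast\rho) \boxtimes V_{\varphi,\delta}
\]
in $\D_{\lis}^\varphi(\Bun_G)^{BW_{E_\bu}}$. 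Applying the exact $\qlb$-linear functor $i^{1\ast}$, which preserves $W_{E_\bu}$-equivariance by functoriality, produces the claimed isomorphism in $\D^\varphi(G(E))^{BW_{E_\bu}}$.

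For each $\delta \in \Irr(S_\varphi,\chi_b)$, the finiteness of $i^{1\ast}\Act_\delta(i^b_\ast\rho)$ can be obtained by realizing $\delta$ as a direct summand of $W|_{S_\varphi}$ for some irreducible $\wh{G}$-representation $W$ on which $Z(\wh{G})^Q$ acts by $\chi_b$; such a $W$ arises as the dual of the highest-weight representation of a suitable dominant cocharacter $\mu'$ of $G$ (with the singleton index set $I'=\{*\}$). Applying the decomposition of the previous paragraph to this $\mu'$ exhibits $i^{1\ast}\Act_\delta(i^b_\ast\rho)$ as a direct summand of $R\Gamma(G,b,\mu')[\rho]$, which by the assertion following \eqref{Shtukas and Hecke} is a bounded complex of finite-length $G(E)$-representations.

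The main obstacle I anticipate is coherence: I need to know that the monoidal $\qlb$-linear structure in \eqref{generic factorization} correctly intertwines with the $W_{E_\bu}$-actions on both sides, so that a direct sum decomposition in $\Rep(S_\varphi)^{BW_{E_\bu}}$ passes term-by-term through the spectral action and then through $i^{1\ast}$. This should be formal in the $\infty$-categorical framework of Section \ref{sec: action} since both functors in \eqref{generic factorization} are $\qlb$-linear and the $BW_{E_\bu}$-decoration is applied functorially, but it does require some unpacking of the definitions to confirm.
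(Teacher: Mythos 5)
Your proof is correct and follows essentially the same route as the paper: the decomposition is immediate from equations (\ref{Shtukas and Hecke}) and (\ref{generic factorization}), and finiteness comes from realizing each $i^{1\ast}\Act_{\delta}(i^b_\ast\rho)$ as a direct summand of a shtuka cohomology complex. Your variant of the finiteness step --- passing to an auxiliary $\mu'$ whose weight representation contains $\delta$ --- is in fact slightly more careful than the paper's one-line argument (which takes the summand inside $R\Gamma(G,b,\mu_\bu)[\rho]$ itself), since it also covers those $\delta$ with $V_{\varphi,\delta}=0$, for which the direct-summand claim for the original $\mu_\bu$ is vacuous.
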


\begin{proof}
The formula follows immediately from equations (\ref{Shtukas and Hecke}) and (\ref{generic factorization}), and the last statement follows from the fact that $i^{1\ast}\Act_{\delta}(i^b_\ast\rho)$ is a direct summand of $R\Gamma(G,b,\mu_\bu)[\rho]$.
\end{proof}

\begin{rema}\label{remarks on main}
We make some remarks on Theorem \ref{main formula}.

\begin{enumerate}

\item We expect that the theorem holds when $\varphi$ is strongly semisimple. For this, one would want to show that $i^b_\ast \rho$ has a $\QCoh([\ast/S_\varphi])$-structure, but this is stronger than $\rho$ having Fargues--Scholze parameter $\varphi$ when $\varphi$ is not generous. In situations when $\varphi$ lifts to a generous parameter on an isogenous group, we expect that one can prove this stronger statement, but we have not checked the details.

\smallskip

\item The categorical conjecture \cite[Conj. X.1.4]{fargues-scholze} predicts that $\D^\varphi_{\lis}(\Bun_G)$ is equivalent to $\D(\Rep(S_\varphi))$ when $\varphi$ is generous. In particular, each $i^{1\ast}\Act_{\delta}(i^b_\ast\rho)$ should be a split complex.

\smallskip

\item Each $\Act_{\delta}(i^b_\ast\rho)$ is non-zero but, unless $\varphi$ is elliptic, many $i^{1\ast}\Act_{\delta}(i^b_\ast\rho)$ will be zero (indeed, only finitely many can be non-zero).

\smallskip

\item Let us briefly compare our results to those of Koshikawa \cite{koshikawa-es}, in particular Thm. 1.3 of \emph{loc. cit}, which says that the $W_{E_\bu}$-representations appearing in $R\Gamma(G,b,\mu_\bu)[\rho]$ are subquotients of $V_\varphi$, without any assumption on $\varphi$ and allowing integral coefficents. In characteristic $0$, we can recover this statement from the assertion that $i^{1\ast} T_V i_{b}^\ast \rho \in \D^\varphi(G(E))^{BW_{E\bu}}$ (which does not require any assumption on $\varphi$). Moreover, a formula of the form  $R\Gamma(G,b,\mu_\bu)[\rho] \cong \bigoplus_{\delta\in \Irr(S_\varphi,\chi_b)} C_\delta \boxtimes V_{\varphi,\delta}$ can be deduced straight from the spectral action as in the proof of \cite[Thm. 1.3]{koshikawa-es}, without the machinery developed here. Indeed, the spectral action shows that the $W_{E_\bu}$-action on $R\Gamma(G,b,\mu_\bu)[\rho]$ factors as
\[
\qlb[W_{E_\bu}] \to \End(\mc{V})/\m_\varphi \to \End(R\Gamma(G,b,\mu_\bu)[\rho]),
\]
where $\m_\varphi \sub \ZZ^{\spec}(G)$ is the maximal ideal cutting out $\varphi$ and $\mc{V}$ is the $W_{E_\bu}$-equivariant vector bundle on $\X$ corresponding to $V$. When $\varphi$ is generous, one has $\End(\mc{V})/\m_\varphi = \End_{S_\varphi}(V_\varphi)$, and by standard representation theory one gets the desired decomposition. The extra information gained from factoring the spectral action is the functorial formula for the $C_\delta$.

\end{enumerate}
\end{rema}

In the rest of this section, we address points (2) and (3) of Remark \ref{remarks on main} when $\varphi$ is elliptic. Recall that $\varphi$ is said to be elliptic if it is semisimple and $S_\varphi/Z(\wh{G})^Q$ is finite. In this case, the discussion in \cite[\S X.2]{fargues-scholze} shows that, for any $b\in B(G)$, a $G_b(E)$-representation $\sigma$ with Fargues--Scholze parameter $\varphi$ has to be supercuspidal, and $b$ has to be basic. In particular,
\begin{equation}\label{splitting}
\prod_{b \in B(G)_{\basic}} i^{b\ast}: \D_{\lis}^\varphi(\Bun_G)  \longrightarrow \prod_{b \in B(G)_{\basic}} \D^\varphi(G_b(E))
\end{equation}
is an equivalence. We then have the following assertion.

\begin{lemm}\label{semisimplicity}
Assume that $\varphi$ is elliptic and that $b$ is basic. Then $\D^\varphi(G_b(E))$ is a product of categories of the form $\D(A)$, for semisimple (not necessarily commutative) Artinian rings $A$. In particular, any compact object in $\D^\varphi(G_b(E))$ is equivalent to a finite direct sum of shifted supercuspidal representations. Moreover, $\varphi$ is generous.
\end{lemm}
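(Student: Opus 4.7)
The strategy is to combine the cited supercuspidality result, the Bernstein decomposition of $\D(G_b(E))$, and a tangent-space analysis at $\varphi$.

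First, by the discussion from \cite[\S X.2]{fargues-scholze} recalled just above the statement, ellipticity of $\varphi$ and basicness of $b$ force every irreducible $G_b(E)$-representation with Fargues--Scholze parameter $\varphi$ to be supercuspidal. Hence under the Bernstein decomposition $\D(G_b(E)) = \prod_\sigma \D(G_b(E))_\sigma$, only \emph{cuspidal} blocks contribute to $\D^\varphi(G_b(E))$. I would then use that each cuspidal Bernstein block is equivalent to $\D(B_\sigma)$ for an associative $\qlb$-algebra $B_\sigma$ whose centre is the coordinate ring $\qlb[T_\sigma]^{F_\sigma}$ of the Bernstein variety $\Omega_\sigma = T_\sigma / F_\sigma$ ($T_\sigma$ the torus of unramified twists of a cuspidal $\sigma$, $F_\sigma$ its finite stabilizer), with the action of $\oo(X)$ factoring through the Fargues--Scholze parameter morphism $\pi_\sigma : \Omega_\sigma \to X$. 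Thus
\[
\D^\varphi(G_b(E))_\sigma \simeq \D\bigl(B_\sigma \otimes^L_{\oo(X)} \kappa(\varphi)\bigr).
\]

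The crucial step is to show that $\pi_\sigma$ is étale at every closed point of $\pi_\sigma^{-1}(\varphi)$. I would compute $T_\varphi X^\Box = H^1_{\mathrm{cont}}(W_E, \mathrm{ad}\,\varphi)$ via Tate local duality and the local Euler--Poincar\'e formula; ellipticity ($\dim S_\varphi = \dim Z(\wh{G})^Q$) forces $H^0$ and $H^2$ to be of minimal dimension, making the tangent dimension of $X^\Box$ at $\varphi$ agree with that of its $\wh{G}$-orbit plus $\dim \Omega_\sigma$. This simultaneously yields smoothness of $X^\Box$ (hence of $X$) at $\varphi$ and matching tangent dimensions at preimages of $\varphi$ for $\pi_\sigma$. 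Consequently the derived base change coincides with the classical one: $B_\sigma \otimes_{\oo(X)} \kappa(\varphi)$ is a finite-dimensional $\qlb$-algebra whose centre is reduced, hence semisimple Artinian. Compact objects in $\D(A)$ for $A$ semisimple Artinian are finite direct sums of shifted simple $A$-modules, which under the Bernstein equivalence are shifted supercuspidals; this gives the first two assertions.

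For generosity of $\varphi$, reducedness of $\pi^{-1}(\varphi) \subset X^\Box$ follows from miracle flatness: $X^\Box$ is smooth at $\varphi$, $X$ is regular at $\pi(\varphi)$ by the tangent computation, and the fibre has the expected dimension $\dim \wh{G} - \dim S_\varphi$, so $\pi$ is flat at $\varphi$ and the scheme-theoretic fibre inherits the smoothness of $\wh{G}/S_\varphi$. The single-orbit statement is a Jacobson--Morozov style argument: any $\varphi' \in \pi^{-1}(\varphi)$ has semisimplification $\varphi$ and is thus conjugate to $\varphi + N$ for some nilpotent $N \in \Lie(S_\varphi)$ commuting with $\varphi$; ellipticity gives $\Lie(S_\varphi) = \Lie(Z(\wh{G})^Q)$, a toral (nilpotent-free) Lie algebra, so $N = 0$. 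I expect the main obstacle to be the tangent-space and étale computation in the second step: precisely matching $T_\varphi X^\Box$ computed via Tate duality against the Bernstein-variety dimension, and leveraging the precise form of ellipticity to upgrade an equality of dimensions into an actual étale property.
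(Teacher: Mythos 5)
Your overall skeleton (supercuspidality from \cite[\S X.2]{fargues-scholze}, reduction to cuspidal Bernstein blocks $\D(B_\sigma)$ with centre $\oo(\Omega_\sigma)$, then base change along $\oo(X)\to \ZZ(B_\sigma)$) matches the paper's, but the crucial step is where you diverge, and as proposed it has a genuine gap. You want to prove that $\pi_\sigma:\Omega_\sigma\to X$ is \'etale near $\pi_\sigma^{-1}(\varphi)$ by computing $T_\varphi X^\Box$ via Tate duality and Euler--Poincar\'e and ``matching tangent dimensions.'' Even granting the smoothness of $X$ at $\varphi$ (which does require $H^0(W_E,\mathrm{ad}\,\varphi(1))=0$, not just finiteness of $S_\varphi/Z(\wh{G})^Q$), a computation entirely on the spectral side says nothing about the differential of $\pi_\sigma$: equality of dimensions of source and target does not make a map \'etale, or even quasi-finite or flat, and you explicitly flag this upgrade as the obstacle without supplying it. The input that actually closes this gap in the paper is the compatibility of the Fargues--Scholze construction with unramified twisting \cite[Thm. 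IX.0.5(ii)]{fargues-scholze}: both the connected component $C\subset X$ through $\varphi$ and $\Omega_\sigma$ are quotients of the torus $Y_{ur}$ of unramified twists by finite subgroups, and $f:\oo(C)\to \ZZ(\mc{C})$ is $Y_{ur}$-equivariant, hence automatically finite Galois (an isogeny of quotient tori). This is what gives Tor-independence and identifies the fibre of the centre; no deformation theory is needed.

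Two further errors. First, ``$B_\sigma\otimes_{\oo(X)}\kappa(\varphi)$ is a finite-dimensional algebra whose centre is reduced, hence semisimple Artinian'' is a non sequitur: upper-triangular $2\times 2$ matrices have reduced centre but are not semisimple. You need the structural fact that $B_\sigma$ is an Azumaya algebra over its centre (\cite[8.1, Prop.]{bushnell-henniart}), so that its fibre over a reduced Artinian quotient of the centre is a product of matrix algebras; this is exactly how the paper concludes. Second, your single-orbit argument misidentifies the fibre of $\pi$: a parameter with semisimplification $\varphi$ corresponds to a nilpotent $N$ satisfying the twisted condition $\mathrm{Ad}(\varphi(w))N=|w|N$, i.e. $N\in H^0(W_E,\mathrm{ad}\,\varphi(1))$, not $N\in\Lie(S_\varphi)=H^0(W_E,\mathrm{ad}\,\varphi)$. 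Ellipticity makes the latter toral, but the vanishing of the former is a separate (true but not tautological) statement, dual to the $H^2$-vanishing you also need for smoothness. The paper instead observes that $C$ consists entirely of unramified twists of $\varphi$, all elliptic, so that $C^\Box\to C$ is smooth with fibres $\wh{G}/S_\varphi$, giving generosity directly.
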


\begin{proof}
Let $C \sub X$ be the connected component containing $\varphi$, let $C^\Box$ be its preimage in $X^\Box$, and let $Y_{ur}$ the group variety of unramified characters $W_E \to Z(\wh{G}) \rtimes Q$. By the local Langlands correspondence for tori, $Y_{ur}$ is isomorphic to the group variety of unramified smooth characters of $G_b(E)$. As remarked just after \cite[Def X.1.2]{fargues-scholze}\footnote{The deformation-theoretic argument in \cite{fargues-scholze} appears to be incomplete. It shows that the family of unramified twists of $\varphi$ is an open subset of $C$, but one needs an argument to show that it is closed. If $Z(\wh{G})^Q$ is finite, i.e. the connected split center of $G$ is trivial, then there are no non-trivial unramified characters, hence the family is a point and therefore closed. In general, one can reduce to this case by considering the map from the moduli space of $L$-parameters of $G$ to the moduli space of $L$-parameters of the quotient of $G$ by its connected split center. The description of $C$ can also be extracted from \cite[Thm. 1.7]{dhkm}.}, $C$ consists of all twists of $\varphi$ by elements of $Y_{ur}$, and all these are strongly semisimple. In particular, $C^\Box \to C$ is smooth with fibres isomorphic to $\wh{G}/S_\varphi$, showing that $\varphi$ is generous.

\medskip

Now consider $\D^C_{\lis}(\Bun_G)$, which is equivalent to the product of the $\D^C(G_b(E))$ for $b\in B(G)_{\basic}$. Each $\D^C(G_b(E))$ is the product of its Bernstein components, all of which are supercuspidal, and the action of $\QCoh(C)$ preserves the Bernstein components. Let $\mc{C}$ be such a Bernstein component, let $\ZZ(\mc{C})$ denote its center and let $\mc{R}(\mc{C})$ denote the endomorphism ring of a compact generator of $\mc{C}$. We have that $\QCoh(C)$ is equivalent to $\D(\oo(C))$ and $\mc{C}$ is equivalent to $\D(\mc{R}(\mc{C}))$. Thus, by Proposition \ref{actions}, the action of $\QCoh(C)$ on $\mc{C}$, viewed as an action of $\D(\oo(C))$ on $\D(\mc{R}(\mc{C}))$, is given by letting $M \in \D(\oo(C))$ act by the endomorphism
\[
N \mapsto M \otimes_{\oo(C),f}N
\]
on $\D(\mc{R}(\mc{C}))$, where $f : \oo(C) \to \ZZ(\mc{C})$ is the induced homomorphism on centres. Both $\oo(C)$ and $\ZZ(\mc{C})$ carry twisting actions of $Y_{ur}$; indeed by choosing a base point they are both isomorphic to quotients of $Y_{ur}$ by finite groups. Since the Fargues--Scholze construction is compatible with twisting \cite[Thm. IX.0.5(ii)]{fargues-scholze}, $f$ is equivariant for the actions of $Y_{ur}$, and hence finite Galois.

\medskip

Armed with this, we now consider $\D^\varphi(G_b(E))$, which is the product of the categories $\mc{C}^\varphi := \Mod_{\oo(C)/\m_\varphi}\mc{C}$, where $\mc{C}$ ranges over the Bernstein components of $\D^C(G_b(E))$ and $\m_\varphi \sub \oo(C)$ is the maximal ideal corresponding to $\varphi$. By the description of the action of $\QCoh(C)$ on $\mc{C}$, $\mc{C}^\varphi$ is equivalent to $\D(\oo(C)/\m_\varphi \otimes^L_{\oo(C),f}\mc{R}(\mc{C}))$, so we need to show that $\oo(C)/\m_\varphi \otimes^L_{\oo(C),f}\mc{R}(\mc{C})$ is a (classical) semisimple Artinian ring. By \cite[8.1, Prop.]{bushnell-henniart}, $\mc{R}(\mc{C})$ is an Azumaya algebra over $\ZZ(\mc{C})$. Since $f$ is finite Galois, it follows that $\oo(C)/\m_\varphi \otimes^L_{\oo(C),f}\mc{R}(\mc{C})$ is concentrated in degree $0$, where it is a finite product of matrix algebras over $\qlb$, and in particular semisimple and Artinian, as desired. The rest of the lemma then follows immediately. 
\end{proof}

We now get the following refinement of Theorem \ref{main formula} for elliptic parameters.

\begin{coro}\label{main elliptic}
In the situation of Theorem \ref{main formula}, assume additionally that $\varphi$ is elliptic. Then each $i^{1\ast}\Act_{\delta}(i^b_\ast\rho)$ is a non-zero finite direct sum of shifted supercuspidal representations. If $\delta$ is one-dimensional, then $i^{1\ast}\Act_{\delta}(i^b_\ast\rho)$ is a single supercuspidal representation, up to shift.
\end{coro}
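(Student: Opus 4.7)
The strategy combines Theorem \ref{main formula} with the strong structural results on $\D^\varphi_{\lis}(\Bun_G)$ in the elliptic case: Lemma \ref{semisimplicity} and the splitting (\ref{splitting}), under which $\D^\varphi_{\lis}(\Bun_G) \simeq \prod_{b' \in B(G)_{\basic}} \D^\varphi(G_{b'}(E))$. The first task is to show that $i^{1\ast}\Act_\delta(i^b_\ast\rho)$ is compact in $\D^\varphi(G(E))$. By Lemma \ref{semisimplicity}, $\D^\varphi(G_b(E))$ is the derived category of a semisimple Artinian ring, so the irreducible supercuspidal $\rho$ is compact there. Under the splitting, $i^b_\ast\rho$ corresponds to the tuple supported at $b$ with value $\rho$, hence is compact in $\D^\varphi_{\lis}(\Bun_G)$. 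The spectral action by the finite-dimensional $\delta$ (a perfect object on $\X_\varphi = [\ast/S_\varphi]$) preserves compactness, and so does the projection $i^{1\ast}$. A second application of Lemma \ref{semisimplicity} then shows that $i^{1\ast}\Act_\delta(i^b_\ast\rho)$ is a finite direct sum of shifted supercuspidal representations.

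For non-vanishing, I first observe that $\Act_\delta(i^b_\ast\rho) \neq 0$: since the trivial representation is a direct summand of $\delta \otimes \delta^\ast$, the object $i^b_\ast\rho$ is a summand of $\Act_\delta \Act_{\delta^\ast}(i^b_\ast\rho)$, so neither $\Act_\delta(i^b_\ast\rho)$ nor $\Act_{\delta^\ast}(i^b_\ast\rho)$ can vanish. To see this object is concentrated at $b'=1$, I would track $Z(\wh{G})^Q$-central characters under the spectral action: the restriction of the $\Rep(S_\varphi)$-action to $\Rep(Z(\wh{G})^Q)$ acts on $\D^\varphi(G_{b'}(E))$ through the Kottwitz character $\chi_{b'}$, and $\delta \in \Irr(S_\varphi,\chi_b)$ has $Z(\wh{G})^Q$-central character $\chi_b$. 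Matching conventions so that $i^b_\ast\rho$ is acted on by $Z(\wh{G})^Q$ via $\chi_b^{-1}$, one finds that $\Act_\delta(i^b_\ast\rho)$ has trivial central character and is therefore supported on the $b'=1$ stratum; hence $i^{1\ast}\Act_\delta(i^b_\ast\rho) \cong \Act_\delta(i^b_\ast\rho) \neq 0$.

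For the one-dimensional case, $\delta$ is invertible in $\Rep(S_\varphi)$, so $\Act_\delta$ is an autoequivalence of $\D^\varphi_{\lis}(\Bun_G)$ and preserves simple objects in the heart up to shift. Since $i^b_\ast\rho$ corresponds, under the splitting, to the simple tuple concentrated at $b$, the object $\Act_\delta(i^b_\ast\rho)$ is simple up to shift. Combined with the central-character analysis, it lies in the $b'=1$ stratum and so $i^{1\ast}\Act_\delta(i^b_\ast\rho)$ is a single shifted supercuspidal representation. The main obstacle in the whole argument is the central-character bookkeeping in the second step: pinning down the precise sign convention under which acting by $\delta$ with Kottwitz character $\chi_b$ transports objects at $b$ to objects at $b'=1$. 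This should follow from the Fargues--Scholze compatibility of the spectral action with twisting (\cite[Thm.~IX.0.5]{fargues-scholze}), but requires careful tracking of normalizations.
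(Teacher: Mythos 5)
Your proof is correct and follows essentially the same route as the paper: non-vanishing via dualizability of $\delta$, support on $\Bun_G^1$ via the splitting (\ref{splitting}) together with the Kottwitz-character compatibility, finiteness via Lemma \ref{semisimplicity}, and the one-dimensional case via $\Act_\delta$ being an equivalence (the paper phrases the last step as $\End(i^{1\ast}\Act_\delta(i^b_\ast\rho))=\End(\rho)=\qlb$ rather than ``preserves simples in the heart'', which is the cleaner way to say what you mean, since an autoequivalence need not preserve a t-structure). The central-character bookkeeping you flag as the main obstacle is indeed the content hiding behind the paper's terse ``supported on $\Bun_G^1$ by equation (\ref{splitting})'', and your explicit treatment of it is the right one.
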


\begin{proof}
$\Act_{\delta}(i^b_\ast\rho)$ is non-zero and supported on $\Bun_G^1$ by equation (\ref{splitting}), hence $i^{1\ast}\Act_{\delta}(i^b_\ast\rho)$ is non-zero. It is a finite direct sum of supercuspidal representations up to shift by Lemma \ref{semisimplicity}. Finally, if $\delta$ is one-dimensional, then $\Act_{\delta}$ is an equivalence, so $\End(i^{1\ast}\Act_{\delta}(i^b_\ast\rho)) = \End(\rho) = \qlb$ by the previous observation on the support. It follows that $i^{1\ast}\Act_{\delta}(i^b_\ast\rho)$ is a single supercuspidal representation up to shift.
\end{proof}

\begin{rema}

We note that Corollary \ref{main elliptic} has previously appeared in the literature; see \cite[\S X.2]{fargues-scholze} for the case when $S_\varphi$ is finite and \cite[Cor. 3.11]{hamann}, \cite[Thm. 2.27]{bmhn} for the general case, where it plays a key role in comparing the Fargues--Scholze construction with the local Langlands correspondence. Note that, in light of Corollary \ref{main elliptic}, the Kottwitz conjecture (with respect to the Fargues--Scholze construction) amounts to computing the image of $i^{1\ast}\Act_{\delta}(i^b_\ast \rho)$ in the Grothendieck group of $G(E)$. The vanishing conjecture, on the other hand, says that $i_1^\ast\Act_{\delta}(i_{b\ast}\rho)$ is concentrated in degree $0$. In particular, Corollary \ref{main elliptic} gives a complete understanding of the action of the Weil group. This is in contrast to \emph{local} approaches to the cohomology of local Shimura varieties predating \cite{fargues-scholze}, which could say very little about the Weil group action.
\end{rema}

\section{Some consequences}\label{sec: consequences}

In this section we indicate some results towards Conjecture \ref{kottwitz conjecture} that can be obtained from Theorem \ref{main formula} using simple tricks or observations, or recent works on Kottwitz conjecture. We will use the same notation as in \S \ref{sec: formula}, with a few extra additions. For clarity, we gather this here. These assumptions will be in place throughout the whole of \S \ref{sec: consequences}.

\begin{setup}\label{setup sec 4}
We let $\rho$ be an irreducible admissible representation of $G_b(E)$. We will make the following assumptions, and use the following notation:

\begin{itemize}
\item The Fargues--Scholze parameter $\varphi$ of $\rho$ is elliptic (this forces $\rho$ to be supercuspidal, as noted above).

\smallskip

\item Since $\varphi$ is elliptic, the set $\Irr(S_\varphi, \chi_b)$ is finite. We let $\delta_1,\dots,\delta_r$ denote its members, where $r=\# \Irr(S_\varphi, \chi_b)$.

\smallskip

\item With $V_\varphi$ as in Theorem \ref{main formula}, write $V_{\varphi,i} = \Hom_{S_\varphi}(\delta_i,V_\varphi)$, viewed as a $W_{E_\bu}$-representation.

\smallskip

\item Finally, assume that $E$ has characteristic $0$. 
\end{itemize}
\end{setup}

\subsection{The Kottwitz conjecture} Let us start with the simplest possible version of Corollary \ref{main elliptic}: Assume that $\varphi$ is stable, i.e. $S_\varphi = Z(\wh{G})^Q$. Then the formula reads
\[
R\Gamma(G,b,\mu_\bu)[\rho] \cong i^{1\ast}\Act_{\chi_b}(i^b_\ast\rho) \boxtimes V_\varphi,
\]
and $i^{1\ast}\Act_{\chi_b}(i^b_\ast\rho)$ is a single supercuspidal representation up to shift. The following proposition is then clear (note that $\chi_1$ is the trivial representation).

\begin{prop}
Assume that $\varphi$ is stable. Then $R\Gamma(G,b,\mu_\bu)[\rho]$ vanishes outside a single degree, and in that degree it is the exterior tensor product of a supercuspidal representation and $V_\varphi$. If, additionally, $b=1$, then $R\Gamma(G,b,\mu_\bu)[\rho] = \rho \boxtimes V_\varphi$, i.e. the strong Kottwitz conjecture holds.
\end{prop}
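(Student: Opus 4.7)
The plan is to specialize the general formula of Theorem \ref{main formula} to the stable case. Since $\varphi$ is stable, $S_\varphi = Z(\wh{G})^Q$, so every irreducible representation of $S_\varphi$ is one-dimensional and agrees with its restriction to $Z(\wh{G})^Q$. Thus $\Irr(S_\varphi,\chi_b) = \{\chi_b\}$ is a singleton with $V_{\varphi,\chi_b} = V_\varphi$, and Theorem \ref{main formula} collapses to
\[
R\Gamma(G,b,\mu_\bu)[\rho] \cong i^{1\ast}\Act_{\chi_b}(i^b_\ast\rho) \boxtimes V_\varphi.
\]
Corollary \ref{main elliptic} applied to the one-dimensional $\delta = \chi_b$ then identifies $i^{1\ast}\Act_{\chi_b}(i^b_\ast\rho)$ as a single supercuspidal representation placed in a single cohomological degree, which proves the first claim.

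For the second claim I would specialize further to $b = 1$. Under the Kottwitz map $\kappa_G : B(G) \to X^\ast(Z(\wh{G})^Q)$, the character $\chi_1$ is trivial, so $\Act_{\chi_1}$ is the action of the structure sheaf $\oo_{\X_\varphi}$, namely the monoidal unit of $\QCoh(\X_\varphi)$, and hence equals the identity endofunctor on $\D_{\lis}^\varphi(\Bun_G)$. The formula becomes $R\Gamma(G,1,\mu_\bu)[\rho] \cong i^{1\ast} i^1_\ast \rho \boxtimes V_\varphi$, and since $i^1$ is an immersion of v-stacks the adjunction counit gives $i^{1\ast} i^1_\ast \rho = \rho$. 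Thus $R\Gamma(G,1,\mu_\bu)[\rho] = \rho \boxtimes V_\varphi$ is concentrated in a single degree and matches the strong Kottwitz prediction, in which the sum over $\Irr(S_\varphi,\chi_b)$ has a unique term.

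I do not expect any serious obstacle here. The only ingredients needed beyond Theorem \ref{main formula} and Corollary \ref{main elliptic} are the triviality of $\chi_1$, which is immediate from Kottwitz's description of $\kappa_G$, and the identity $i^{1\ast} i^1_\ast = \mathrm{id}$, which is a standard property of the $(i^{1\ast}, i^1_\ast)$-adjunction on $\D_{\lis}$ (the pushforward along an immersion being fully faithful), as developed in \cite[\S VII.6]{fargues-scholze}. All the substantive work has already been absorbed into the preceding results, which is precisely why the author can dismiss the proposition as ``clear''.
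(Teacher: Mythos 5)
Your proposal is correct and follows exactly the route the paper intends (the paper simply declares the proposition ``clear'' after noting that $\chi_1$ is trivial): specialize Theorem \ref{main formula} and Corollary \ref{main elliptic} to the singleton $\Irr(S_\varphi,\chi_b)=\{\chi_b\}$, and for $b=1$ use that $\Act_{\chi_1}$ is the identity and $i^{1\ast}i^1_\ast\rho=\rho$. No gaps.
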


We now return to the general case. In \cite[Thm 1.0.2]{hkw}, the Kottwitz conjecture is proven after disregarding the action of $W_{E_\bu}$, under further assumptions on $L$-packets. We wish to combine this with Theorem \ref{main formula} in order to deduce Corollary \ref{intro kottwitz} from the introduction. This will use the following assumptions and notation:

\begin{setup}\label{setup local Langlands}
Assume the refined local Langlands conjecture in the form of \cite[Conjecture G]{kaletha}. In particular, we have an $L$-parameter $\phi$ attached to $\rho$, and an $L$-packet $\Pi_\phi(G)$ (a set) of $G(E)$-representations. We assume that $\phi$ is discrete (i.e. that $S_\phi/Z(\wh{G})^Q$ is finite), with centralizer $S_\phi$, and that all members of $\Pi_\phi(G)$ are supercuspidal.
\end{setup}

Under these assumptions, \cite[Thm. 1.0.2]{hkw} asserts that
\[
\Mant_{G,b,\mu_\bu}(\rho) = \sum_{\pi \in \Pi_\phi(G)} \dim \Hom_{S_\phi}(\delta_{\pi,\rho},V_\phi) \cdot \pi 
\] 
in the Grothendieck group of $G(E)$-representations (recall from the introduction that $\Mant_{G,b,\mu_\bu}(\rho):= \sum_{n} (-1)^n H^n(R\Gamma(G,b,\mu_\bu)[\rho]$). Here $\delta_{\pi,\rho}$ is a certain (not necessarily irreducible) representation of $S_\phi$ attached to the pair $(\pi,\rho)$ with $Z(\wh{G})^Q$ acting by $\chi_b$, described in \cite[\S 2.3]{hkw}. We will need the following lemma concerning the representations $\delta_{\pi,\rho}$.

\begin{lemm}\label{multiplicities}
Any $\delta\in \Irr(S_\phi,\chi_b)$ occurs as a subrepresentation of  $\bigoplus_{\pi\in \Pi_\phi(G)}\delta_{\pi,\rho}$. If all $\delta\in \Irr(S_\phi,\chi_b)$ are one-dimensional, then we have
\[
\sum_{\pi\in \Pi_\phi(G)} \dim \Hom_{S_\phi}(\delta_{\pi,\rho},W) =  \dim \Hom_{S_\phi}(\oplus_{\pi\in \Pi_\phi(G)}\delta_{\pi,\rho},W) \geq \dim W
\]
for any $W \in \Rep(S_\phi)$ with $Z(\wh{G})^Q$ acting as $\chi_b$.
\end{lemm} 

\begin{proof}
For this lemma, we will refer to \cite[\S 2.3]{hkw} and the references within for unexplained terminology, notation, and facts (see also \cite[\S 4.5]{kaletha}). There is a group $S_\phi^+$ arising as a covering group $f : S_\phi^+ \to S_\phi$. Set $Z(\wh{\bar{G}})^+ := f^{-1}(Z(\wh{G})^Q)$; this group is central in $S_\phi^+$. It induces a map $\pi_0(Z(\wh{\bar{G}})) \to \pi_0(S_\phi^+)$. If $\lambda$ is a character of $\pi_0(Z(\wh{\bar{G}}))$, we will say that a representation of $\pi_0(S_\phi^+)$ has character $\lambda$ if $\pi_0(Z(\wh{\bar{G}}))$ acts as $\lambda$ on it. According to \cite[Conjecture G]{kaletha}, there is a bijection $\pi \to \tau_{z,\mf{m},\pi}$ between $\Pi_\phi(G)$ and irreducible representations of $\pi_0(S_\phi^+)$ with a certain character $\lambda_z$, and there is an irreducible representation $\tau_{z,\mf{m},\rho}$ of $\pi_0(S_\varphi^+)$ associated with $\rho$, which has a certain character $\lambda_z + \lambda_{z_b}$. By definition, $\delta_{\pi,\rho}$ is the descent of $\tau_{z,\mf{m},\pi}^\ast \otimes \tau_{z,\mf{m},\rho}$ to $S_\phi$ along $f$ (and hence $\lambda_{z_b}$ corresponds to $\chi_b$). With this in mind, let $\delta$ be an irreducible representation of $S_\phi$ with character $\chi_b$. Pullback along $f$ gives an equivalence of representations of $S_\phi$ with character $\chi_b$ and representations of $\pi_=(S_\phi^+)$ with character $\lambda_{z_b}$, so we also let $\delta$ denote the corresponding representation of $\pi_0(S_\phi^+)$. Then $\delta^\ast \otimes \tau_{z,\mf{m},\rho}$ has character $\lambda_z$, and can therefore be written as a sum of the $\tau_{z,\mf{m},\pi}$ (possibly with multiplicities). Moreover, $\delta$ is a summand of $(\delta^\ast \otimes \tau_{z,\mf{m},\rho})^\ast \otimes \tau_{z,\mf{m},\rho}$. This proves the first part of the lemma. To deduce the second part, we now have $\oplus_{\delta\in\Irr(S_\phi,\chi_b)}\delta  \sub \oplus_{\pi\in \Pi_\phi(G)}\delta_{\pi,\rho}$ (non-canonically), so
\[
\dim \Hom_{S_\phi}(\oplus_{\pi\in \Pi_\phi(G)}\delta_{\pi,\rho},W) \geq \dim \Hom_{S_\phi}(\oplus_{\delta\in\Irr(S_\phi,\chi_b)}\delta,W) \geq \dim W,
\]
where the second inequality uses that the $\delta$ are one-dimensional.
\end{proof}

With these preparations, we can deduce Corollary \ref{intro kottwitz}.

\begin{theo}\label{Kottwitz}
Assume Setups \ref{setup sec 4} and \ref{setup local Langlands}, and that all $\delta_i$ and all $\delta \in \Irr(S_\phi,\chi_b)$ are one-dimensional. Write $i^{1\ast}\Act_{\delta_i}(i^b_\ast\rho) = \pi_i^\prime[n_i] $, where $\pi_i^\prime$ is supercuspidal and $-[n_i]$ denotes a shift by an integer $n_i$ (we can do this by Corollary \ref{main elliptic}). Then
\[
\Mant_{G,b,\mu_\bu}(\rho) = \sum_{i=1}^r \pi_1^\prime \boxtimes V_{\varphi,i},
\]
and $\{ \pi_1^\prime,\dots,\pi_r^\prime\} = \Pi_\phi(G)$ as sets. Moreover, $H^n(R\Gamma(G,b,\mu_\bu)[\rho])=0$ when $n$ is odd.
\end{theo}

\begin{proof}
By Theorem \ref{main formula}, $\Mant_{G,b,\mu_\bu}(\rho) = \sum_{i=1}^r (-1)^{n_i} \pi_i^\prime \boxtimes V_{\varphi,i}$ in $\mathrm{Groth}(G(E)\times W_{E_\bu})$. In particular, $\Mant_{G,b,\mu_\bu}(\rho) = \sum_{i=1}^r (-1)^{n_i} \dim V_{\varphi,i} \cdot \pi_i^\prime $ in $\mathrm{Groth}(G(E))$, where we have that
\[
\sum_{i=1}^r (-1)^{n_i} \dim V_{\varphi,i} \leq \sum_{i=1}^r  \dim V_{\varphi,i} = \dim V
\]
since the $\delta_i$ run through $\Irr(S_\varphi,\chi_b)$. On the other hand,
\[
\Mant_{G,b,\mu_\bu}(\rho) = \sum_{\pi\in \Pi_\phi(G)}  \dim \Hom_{S_\phi}(\delta_{\pi,\rho},V_\phi) \cdot \pi
\]
in $\mathrm{Groth}(G(E))$ by \cite[Thm. 1.0.2]{hkw}. By Lemma \ref{multiplicities}, we have $\sum_{\pi\in \Pi_\phi(G)}  \dim \Hom_{S_\phi}(\delta_{\pi,\rho},V_\phi) \geq \dim V$. Note that, for a fixed $i$ (or $\pi$) and varying $\mu_\bu$, some $V_{\varphi,i}$ (or $\Hom_{S_\phi}(\delta_{\pi,\rho},V_\phi)$) will be non-zero. Using this fact and comparing the two inequalities we have derived so far, we see that the $n_i$ are even and that $\{ \pi_1^\prime,\dots,\pi_r^\prime \} = \Pi_\varphi(G)$, as desired.
\end{proof}

\begin{rema}
We make a few remarks on this theorem.

\begin{enumerate}

\item While \cite[Thm. 1.0.2]{hkw} is only stated for $\mu_\bu = \mu$ a single cocharacter, its proof goes through for general $\mu_\bu$. We note, however, that Theorem \ref{Kottwitz} for general $\mu_\bu$ follows from the case $\mu_\bu = \mu$ (in light of Corollary \ref{main elliptic}), as this case suffices to show that $\{ \pi_1^\prime,\dots,\pi_r^\prime\} = \Pi_\phi(G)$ and that the $n_i$ are even.

\smallskip

\item As noted in the introduction, this is rather close to the Kottwitz conjecture, but falls short in two aspects. Firstly, we do not know that $\phi = \varphi$ (indeed, we do not known that $\varphi$ is elliptic in most cases it is expected to be). Lacking this, our result looks rather amusing: The $G(E)$-representations in $R\Gamma(G,b,\mu_\bu)[\rho]$ arise from $\phi$, whereas the $W_{E_\bu}$-action is given in terms of $\varphi$. Moreover, even if $\phi = \varphi$, it is not clear to us how the resulting parametrizations of $\Pi_\phi(G)$ compare. At present, the only approach available to these questions is through the cohomology of (global) Shimura varieties. It is known that $\phi = \varphi$ for inner forms of $\GL_n$ and $\SL_n$ for general $E$ \cite{fargues-scholze,hkw}, for inner forms of $\GSp_4$ and $\Sp_4$ when $E$ is unramified over $\Qp$ \cite{hamann}, for some (similitude) unitary groups in an odd number of variables \cite{bmhn} when $E=\Qp$, and for special orthogonal and unramified unitary groups when $E$ is unramified \cite{peng}. Comparing the parametrizations of the $L$-packet seems to be more subtle, however.

\smallskip

\item One-dimensionality is known to hold in many cases. Examples include inner forms of $\SL_n$, $\Sp_{2n}$, $\SO_{2n+1}$ and unitary groups. Moreover, \cite[Conjecture G]{kaletha} is known for the regular supercuspidal $L$-packets constructed by Kaletha for all $G$ split over a tame extension of $E$ and for $p$ sufficiently large, in \cite{kaletha-regular}; see \cite{kaletha-regular,fks}. We refer to the introduction of \cite{hkw} for more details.
\end{enumerate}

\end{rema}

\subsection{Vanishing}

Recall that we assume Setup \ref{setup sec 4} throughout \S \ref{sec: consequences}. One interesting aspect of Theorem \ref{main formula} is its uniformity when varying $\mu_\bu$: Only the $W_{E_\bu}$-representations change\footnote{A trivial consequence is that, for fixed $(G,b)$ and $\rho$, there exists an $N$ such that $H^i(R\Gamma(G,b,\mu_\bu)[\rho])$ vanishes for all $|i|>N$ and all $\mu_\bu$.}. This observation can sometimes be used to propagate vanishing results for some $\mu_\bu$ to a larger collection\footnote{Conjecture \ref{kottwitz conjecture}(2) was recently proven by the first author in \cite{hansen} in many cases where $\mu_\bu$ is a single minusucle cocharacter.}. To illustrate the method, we reprove the strong Kottwitz conjecture for inner forms of $\GL_n$, which was previously proved by the first author \cite[Thm. 1.9]{hansen}, using results on averaging functors from \cite{anschutz-lebras}.

\begin{theo}\label{strong Kottwitz GLn}
Assume Setup \ref{setup sec 4} and that $G$ is an inner form of $\GL_n$. Then $R\Gamma(G,b,\mu_\bu)[\rho] \cong \pi \boxtimes V_\varphi$, where the right hand is concentrated in degree $0$ and $\pi$ is the irreducible admissible representation of $G(E)$ corresponding to $\varphi$ under the usual local Langlands correspondence. 
\end{theo}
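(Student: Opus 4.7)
The idea is to combine the uniformity-in-$\mu_\bu$ afforded by Theorem \ref{main formula} with a single-case vanishing extracted from the averaging-functor results of \cite{anschutz-lebras}.

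For inner forms of $\GL_n$ we have $\widehat{G} = \GL_n$ and $Q = 1$, so $Z(\widehat{G})^Q = \mathbb{G}_m$. By the definition of ellipticity for $\GL_n$, an elliptic $L$-parameter $\varphi$ is an irreducible representation $W_E \to \GL_n(\qlb)$, so by Schur's lemma $S_\varphi = Z(\widehat{G}) = \mathbb{G}_m$. Hence $\Irr(S_\varphi,\chi_b) = \{\chi_b\}$, and Corollary \ref{main elliptic} specializes to
\[
R\Gamma(G,b,\mu_\bu)[\rho] \cong \pi'[n] \boxtimes V_\varphi
\]
for a supercuspidal representation $\pi'$ of $G(E)$ with Fargues--Scholze parameter $\varphi$, and some integer $n$. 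To conclude $\pi' = \pi$, I would invoke the known compatibility of the Fargues--Scholze parameter with the classical one for inner forms of $\GL_n$ (see \cite{fargues-scholze, hkw}): for elliptic $\varphi$ the semisimple parameter equals the classical parameter, and since classical local Langlands is a bijection on supercuspidals, $\pi'$ is forced to be $\pi$.

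The core of the argument is the vanishing $n = 0$. The key structural observation is that $i^{1\ast}\Act_{\chi_b}(i^b_\ast \rho) \cong \pi'[n]$ depends only on the datum $(G,b,\rho)$, not on $\mu_\bu$. It therefore suffices to exhibit \emph{one} choice of $\mu_\bu$ for which $R\Gamma(G,b,\mu_\bu)[\rho]$ is known to be concentrated in a single degree; the shift is then necessarily $-n$, and a parity or dimension count identifies that degree with the middle one, forcing $n = 0$. Concretely, I would take $\mu_\bu = \mu$ to be a single minuscule cocharacter, so the local shtuka space is a local Shimura variety, and apply the averaging-functor results of \cite{anschutz-lebras}: these establish that the Hecke operators $T_V$ for minuscule $V$ are perverse $t$-exact on $\D_{\lis}(\Bun_G)$ for inner forms of $\GL_n$. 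Applied to $i^b_\ast \rho$ (which lies in a single perverse degree by supercuspidality) and followed by the $t$-exact restriction $i^{1\ast}$, this lands $R\Gamma(G,b,\mu)[\rho]$ in a single cohomological degree, as required.

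The main obstacle is the $t$-exactness input from \cite{anschutz-lebras}, which is the substantive geometric ingredient; everything else is formal. Once this is supplied, Theorem \ref{main formula} does the rest of the work, packaging the single-$\mu_\bu$ vanishing into a statement about all $\mu_\bu$ simultaneously, yielding the full strong Kottwitz conjecture for inner forms of $\GL_n$.
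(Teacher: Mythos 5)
Your argument is correct and follows essentially the same route as the paper: reduce to the stable case where the formula reads $i^{1\ast}\Act_{\chi_b}(i^b_\ast\rho)\boxtimes V_\varphi$, identify the complex as a shift of $\pi$ via the known compatibility of the Fargues--Scholze parameter with classical local Langlands for inner forms of $\GL_n$ (the paper cites \cite[Thm. 1.0.3]{hkw}), and kill the shift by exhibiting a single $b$-compatible minuscule $\mu_b$ for which concentration in degree $0$ is known. The only cosmetic difference is that the paper quotes this last input as a black box, namely \cite[Thm. 1.6]{hansen}, rather than unwinding it to the perverse $t$-exactness statements of \cite{anschutz-lebras} as you do.
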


\begin{proof}
Note that we are in the stable situation here, so we need to show that $i^{1\ast}\Act_{\chi_b}(i^b_\ast\rho)$ is $\pi$ concentrated in degree $0$. That it is a shift of $\pi$ follows from \cite[Thm. 1.0.3]{hkw}, since its Fargues--Scholze $L$-parameter is $\varphi$. To prove vanishing, note that we can choose a minuscule cocharacter $\mu_b$ such that $R\Gamma(G,b,\mu_b)[\rho] = i^{1\ast}\Act_{\chi_b}(i^b_\ast\rho) \boxtimes W_\varphi$, where $W$ the dual of the irreducible representation with extreme weight $\mu_b$. By \cite[Thm. 1.6]{hansen}, $R\Gamma(G,b,\mu_b)[\rho]$ vanishes outside degree $0$, finishing the proof.
\end{proof}

Another trick that can sometimes be used is duality. Letting $\bD$ denote Verdier duality on $\D_{\lis}(\Bun_G)$, one has
\[
\bD(R\Gamma(G,b,\mu_\bu)[\rho]) \cong \bD(i^{1\ast} T_V i^b_\ast \rho) \cong i^{1\ast} T_V i^b_\ast \rho^\vee \cong R\Gamma(G,b,\mu_\bu)[\rho^\vee],
\]
as objects in $\D_{\lis}(\Bun_G)$ (i.e. forgetting the $W_{E_\bu}$-action), where we have used that $i_1^\ast = i_1^!$, $i_{b!}\rho^\vee = i_{b\ast}\rho^\vee$ (by \cite[Thm 1.3]{hansen} and \cite[Thm IX.05(iv)]{fargues-scholze}, since $\varphi$ is elliptic), and the interplay between $\bD$ and pullback/pushforward, and the Hecke operators \cite[Thm. IX.0.1(i)]{fargues-scholze}. This gives us the following vanishing theorem. In its formulation, we note that there is a canonical isomorphism between the cocenters of $G$ and $G_b$ over $E$, so any smooth character of $G(E)$ can be naturally viewed as a smooth character of $G_b(E)$ (and vice versa).

\begin{prop}\label{vanishing}
Assume Setup \ref{setup sec 4}, that the $\delta_i$ are one-dimensional, and that, writing $i^{1\ast} \Act_{\delta_i}(i^b_\ast\rho) = \pi_i [n_i]$, the $\pi_i$ are distinct. Assume further that there is a smooth character $\chi$ of $G(E)$ such that $\rho^\vee \cong \rho \otimes \chi$ and $\pi_i^\vee \cong \pi_i \otimes \chi$ for all $i$. Then $R\Gamma(G,b,\mu_\bu)[\rho]$ is concentrated in degree $0$.
\end{prop}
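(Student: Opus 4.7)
The plan is to exploit the displayed Verdier duality identity $\bD\bigl(R\Gamma(G,b,\mu_\bu)[\rho]\bigr) \cong R\Gamma(G,b,\mu_\bu)[\rho^\vee]$ in $\D_{\lis}(\Bun_G)$ together with the twisting hypotheses to force the shifts $n_i$ to vanish. As a first step, Theorem \ref{main formula} and Corollary \ref{main elliptic}, after forgetting the $W_{E_\bu}$-action, give an isomorphism of $G(E)$-representations
\[
R\Gamma(G,b,\mu_\bu)[\rho] \cong \bigoplus_{i=1}^{r} \pi_i[n_i]^{\oplus d_i}, \qquad d_i := \dim V_{\varphi,i}.
\]
Verdier dualising and applying the displayed identity then yields
\[
R\Gamma(G,b,\mu_\bu)[\rho^\vee] \cong \bigoplus_{i=1}^{r} \pi_i^\vee[-n_i]^{\oplus d_i}
\]
as $G(E)$-representations.

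Next I would obtain a second computation of the same complex using $\rho^\vee \cong \rho \otimes \chi$. Via the canonical isomorphism between the cocenters of $G$ and its inner forms, the smooth character $\chi$ extends to a compatible family of smooth characters of $G_b(E)$ for all $b \in B(G)$, and hence induces an autoequivalence $A \mapsto A \otimes \mc{L}_\chi$ of $\D_{\lis}(\Bun_G)$ preserving each stratum. By the compatibility of the Fargues--Scholze Hecke action with character twisting (cf.\ \cite[Thm. IX.0.5(ii)]{fargues-scholze}), this autoequivalence commutes with $T_V$ up to a twist of the $W_E^I$-action by a character determined by $\phi_\chi$ and $V$. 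Forgetting the Weil action, this produces
\[
R\Gamma(G,b,\mu_\bu)[\rho \otimes \chi] \cong R\Gamma(G,b,\mu_\bu)[\rho] \otimes \chi \cong \bigoplus_i (\pi_i \otimes \chi)[n_i]^{\oplus d_i} \cong \bigoplus_i \pi_i^\vee[n_i]^{\oplus d_i},
\]
where the last isomorphism uses the hypothesis $\pi_i^\vee \cong \pi_i \otimes \chi$.

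Comparing the two descriptions of $R\Gamma(G,b,\mu_\bu)[\rho^\vee]$ yields an isomorphism
\[
\bigoplus_i \pi_i^\vee[-n_i]^{\oplus d_i} \cong \bigoplus_i \pi_i^\vee[n_i]^{\oplus d_i}
\]
of $G(E)$-representations. Since the $\pi_i$ are pairwise distinct by assumption, so are the $\pi_i^\vee$; matching up multiplicities of each $\pi_i^\vee$ in each cohomological degree then forces $n_i = -n_i$, and hence $n_i = 0$ for every $i$, giving the desired concentration in degree zero.

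The main obstacle is the second step, namely extracting the right form of the twisting compatibility from \cite{fargues-scholze}. Concretely, one needs tensoring by $\mc{L}_\chi$ to intertwine with the Hecke correspondence, i.e.\ that the two pullbacks $h^{\leftarrow,*}\mc{L}_\chi$ and $h^{\rightarrow,*}\mc{L}_\chi$ along the Hecke stack differ by a character of $W_E^I$ determined by $\phi_\chi$ and the highest weight of $V$. Once this compatibility is in hand, the remainder is an elementary multiplicity count that relies crucially on the distinctness assumption on the $\pi_i$.
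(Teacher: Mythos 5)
Your proposal is correct and follows essentially the same route as the paper's own proof: apply the displayed Verdier duality identity together with $\rho^\vee \cong \rho \otimes \chi$ to get $\bD(R\Gamma(G,b,\mu_\bu)[\rho]) \cong R\Gamma(G,b,\mu_\bu)[\rho]\otimes\chi$, evaluate both sides via Theorem \ref{main formula}/Corollary \ref{main elliptic}, and use $\pi_i^\vee \cong \pi_i\otimes\chi$ plus distinctness of the $\pi_i$ to force $n_i=-n_i=0$. The twisting compatibility you flag as the main obstacle is exactly the step the paper dispatches with ``computing from the definitions.''
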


\begin{proof}
Since $\rho^\vee \cong \rho \otimes \chi$, Verdier duality gives us that $\bD(R\Gamma(G,b,\mu_\bu)[\rho]) \cong R\Gamma(G,b,\mu_\bu)[\rho \otimes \chi]$. Computing from the definitions, one sees that $R\Gamma(G,b,\mu_\bu)[\rho \otimes \chi] \cong R\Gamma(G,b,\mu_\bu)[\rho] \otimes \chi$. Thus, evaluating both sides as a $G(E)$-representation using Theorem \ref{main formula} and Corollary \ref{main elliptic}, we get that
\[
\bigoplus_{i=1}^r \pi_i^\vee[-n_i]^{\oplus \dim V_{\varphi,i}} \cong \bigoplus_{i=1}^r (\pi_i \otimes \chi)[n_i]^{\oplus \dim V_{\varphi,i}}.
\]
Since $\pi^\vee_i \cong \pi_i \otimes \chi$ and the $\pi_i$ are assumed to be distinct, it follows that $n_i=0$ for all $i$ with non-zero multiplicity, as desired.
\end{proof}

\begin{rema}\label{vanishing remark}
Let us finish with a few remarks on Proposition \ref{vanishing}, and the vanishing conjecture more generally.

\begin{enumerate}
\item If $\varphi$ is stable, the assumption $\pi_i^\vee \cong \pi_i \otimes \chi$ can be dropped, and instead deduced as a consequence of the argument.

\smallskip

\item If, in addition to the assumptions in Proposition \ref{vanishing}, we also assume Setup \ref{setup local Langlands}, then the assertion that the $\pi_i$ are distinct is equivalent to $\Pi_\phi(G)$ having size $r$. For example, this is true if $S_\phi/Z(\wh{G})^Q \cong S_\varphi / Z(\wh{G})^Q$ and $G$ is quasisplit (then $\Pi_\phi(G)$ is in bijection with $\Irr(S_\varphi/Z(\wh{G})^Q)$, which is equal to $\{\delta_i \delta_j^{-1} \mid i=1,\dots,r\}$ for arbitrary $j$). On the other hand, if $G$ is the non-split inner form of $\SL_2$ and $S_\varphi = (\Z /2)^2$, then we know that $\phi=\varphi$ and that $\Pi_\phi(G)$ has size $1$.

\smallskip

\item Essential self-duality is a fairly common feature among representations of classical groups and related groups. For example, it holds for all representations of $\GSp_{2g}$; see \cite[Remark 5]{prasad} (in this case $\chi$ is determined by the central character and hence by $\varphi$). Other examples include $\SO_{2n+1}$ and the unique non-split inner form of $\GSp_4$.

\smallskip

\item Conjecture \ref{intro kottwitz}(2) can sometimes be passed through isogenies. For example, one can deduce it for $\SL_n$ and $\Sp_4$ from the case of $\GL_n$ and $\GSp_4$, respectively, using computations as in the proof of \cite[Thm. IX.6.1]{fargues-scholze}.
\end{enumerate}
\end{rema}

\bibliographystyle{alpha}
\bibliography{kottwitzbib}

\end{document}